\numberwithin{equation}{section}
\newtheorem{theorem}{Theorem}[section]
\newtheorem{lemma}[theorem]{Lemma}
\theoremstyle{definition}
\theoremstyle{definition}
\theoremstyle{definition}
\newcommand{\supp}{\operatorname{supp}}
\newcommand{\dist}{\operatorname{dist}}
\newcommand{\diam}{\operatorname{diam}}
\newcommand{\bR}{\mathbb{R}}
\providecommand{\set}[1]{\{#1\}}
\providecommand{\Set}[1]{\left\{#1\right\}}
\providecommand{\abs}[1]{\lvert#1\rvert}
\providecommand{\Abs}[1]{\left\lvert#1\right\rvert}
\providecommand{\norm}[1]{\lVert#1\rVert}
\renewcommand{\vec}[1]{\boldsymbol{#1}}
\renewcommand{\qedsymbol}{$\blacksquare$}
\begin{document}
\title{Green's function for nondivergence elliptic operators in two dimensions}

\author[H. Dong]{Hongjie Dong}
\address[H. Dong]{Division of Applied Mathematics, Brown University,
182 George Street, Providence, RI 02912, United States of America}
\email{Hongjie\_Dong@brown.edu}
\thanks{H. Dong was partially supported by the NSF under agreement DMS-1600593.}

\author[S. Kim]{Seick Kim}
\address[S. Kim]{Department of Mathematics, Yonsei University, 50 Yonsei-ro, Seodaemun-gu, Seoul 03722, Republic of Korea}
\email{kimseick@yonsei.ac.kr}
\thanks{S. Kim is partially supported by National Research Foundation of Korea(NRF) under agreement   NRF-20151009350 and NRF-2019R1A2C2002724.}

\subjclass[2010]{Primary 35J08; Secondary 42B37}

\keywords{Green's function; BMO; Dini mean oscillation}

\begin{abstract}
We construct the Green function for second-order elliptic equations in non-divergence form when the mean oscillations of the coefficients satisfy the Dini condition.
We show that the Green's function is BMO in the domain and establish logarithmic pointwise bounds.
We also obtain pointwise bounds for first and second derivatives of the Green's function.
\end{abstract}
\maketitle

\section{Introduction and main results}

We consider a second-order elliptic operator in a bounded, connected, and open set $\Omega$ in $\bR^2$.
Let $L$ be an elliptic operator in non-divergence form given by
\begin{equation}				\label{master}
Lu=\sum_{i,j=1}^2 a^{ij}(x) D_{ij}u.
\end{equation}
Here, we assume (without loss of generality) that the coefficients $a^{ij}$ are symmetric and defined on the entire space $\bR^2$.
We require that the matrix $\mathbf{A}=(a^{ij})$ satisfy the uniform ellipticity condition, i.e., there exists a constant $\nu \in (0,1]$ such that
\begin{equation}				\label{ellipticity}
\nu \abs{\xi}^2 \le \sum_{i,j=1}^2 a^{ij}\xi_i \xi_j \le \nu^{-1} \abs{\xi}^2.
\end{equation}
We shall say that $\mathbf{A}=(a^{ij})$ are of Dini mean oscillation
in $\Omega$ if the mean oscillation function $\omega_{\mathbf A}: \bR_+ \to \bR$ defined by
\begin{equation}					\label{13.24f}
\omega_{\mathbf A}(r):=\sup_{x\in \overline \Omega} \fint_{\Omega(x,r)} \,\abs{
\mathbf A(y)-\bar {\mathbf A}_{\Omega(x,r)}}\,dy,
\end{equation}
where
\[
\bar{\mathbf A}_{\Omega(x,r)} :=\fint_{\Omega(x,r)} \mathbf A = \frac{1}{\abs{\Omega(x,r)}}\int_{\Omega(x,r)} \mathbf A
\quad\text{and}\quad \Omega(x,r)=B(x,r) \cap \Omega,
\]
satisfies the Dini condition, i.e.,
\begin{equation}			\label{dini}
\int_0^1 \frac{\omega_{\mathbf A}(t)}t \,dt <+\infty.
\end{equation}

In this article, we shall show that if $\Omega$ is a bounded domain with regular boundary and the coefficients $\mathbf{A}=(a^{ij})$ are of Dini mean oscillation in $\Omega$, then the Green's function $G(x,y)$ exists and has logarithmic pointwise bounds.
In fact, we show that $G(\cdot, y)$ is BMO in $\Omega$ and $G(\cdot,\cdot)$ is continuous on $(\Omega\times \Omega) \setminus \{(x,x):x\in \Omega\}$.
We shall also derive pointwise bounds for $G(x,y)$ as well as its derivatives $D_x G(x, y)$ and $D^2_x G(x,y)$.

It is well known that the elliptic operators in divergence form admit Green's functions that are comparable to those of the Laplace operator, even when the coefficients are just measurable; see \cite{LSW63, GW82, KN85, CL92}.
There are also many papers in the literature dealing with the existence and estimates of Green's functions or fundamental solutions of non-divergence form elliptic operators with measurable or continuous coefficients; see e.g., \cite{Bauman84b, Bauman85, Esc2000, FS84, Krylov92}.
In the case when the coefficient matrix $\mathbf{A}$ is uniformly H\"older (or Dini) continuous, it is well known that a Green's function is continuous and satisfies the pointwise bound comparable to that of Laplace operator; see e.g., \cite{Miranda, S73, An78}. For parabolic operators, we refer the reader to \cite{Friedman} for the construction of fundamental solutions by the parametrix method, and also \cite{Eidelman, C11}.
However, unlike the Green's function for elliptic operators in divergence form, in general, Green's function for non-divergence form elliptic operators do not necessarily have pointwise bounds, even if the domain is smooth and the coefficients are uniformly continuous; see \cite{Bauman84}.

In a recent paper \cite{HK20}, it is shown that in three dimensions or higher, the Green's functions for non-divergence form elliptic operators have the pointwise bound $c\abs{x-y}^{2-n}$ if the coefficients of the operator have Dini mean oscillations.
See also \cite{MM10} for a related result with coefficients satisfying ``square'' Dini condition.
However, the proof in \cite{HK20} does not work in the two dimension, which is mostly due to the failure of Sobolev embedding $W^{1,2} \hookrightarrow L^{2n/(n-2)}$ when $n=2$.
It is worthwhile to mention that even for the Laplace case, the behavior of Green's functions is  different in two dimensions.
As a matter of fact, there are quite a few papers devoted to the study of two dimensional Green's functions; see e.g. \cite{DK09, OKB15, TKB13}.

The adjoint operator $L^\ast$ is given by
\[
L^* u= \sum_{i,j=1}^2 D_{ij} (a^{ij}(x) u),
\]
where the coefficient matrix $\mathbf{A}=(a^{ij})$ is the same as that of $L$ and thus, it is of Dini mean oscillation in $\Omega$.
It is known that if $f \in L^p(\Omega)$ with $p \in (1,\infty)$, then the unique $L^p$ solution (see, e.g., \cite[Lemma~2]{EM2016}) of the problem
\begin{equation}				\label{eq20.40th}
L^\ast u = f\;\text{ in }\;\Omega,\quad u=0 \;\text{ on }\;\Omega,
\end{equation}
is uniformly continuous in $\Omega$; see Theorem~1.8 in \cite{DEK18}.
The definitions of $\mathsf{BMO}(\Omega)$, $\norm{\cdot}_\ast$,  $H^1$ atom in $\Omega$, etc. are given in the next section.

Now, we state our main results.
In our first theorem, we shall assume, in place of \eqref{13.24f}, that the $L^2$ mean oscillation
\begin{equation}					\label{13.24g}
\omega_{\mathbf A}(r):=\sup_{x\in \overline \Omega} \left( \fint_{\Omega(x,r)} \,\abs{
\mathbf A(y)-\bar {\mathbf A}_{\Omega(x,r)}}^2\,dy\right)^{\frac12}
\end{equation}
satisfies the Dini condition \eqref{dini}.
In light of H\"older's inequality, this assumption is stronger than our hypothesis that $\mathbf A$ is of Dini mean oscillation.
We will return to the original definition \eqref{13.24f} in our second theorem.

\begin{theorem}				\label{thm1}
Let $\Omega$ be a bounded $C^{1,1}$ domain in $\bR^2$.
Assume the coefficient $\mathbf{A}=(a^{ij})$ of the operator $L$ in \eqref{master} satisfies the uniform ellipticity condition \eqref{ellipticity} and that the $L^2$ mean oscillation \eqref{13.24g} of $\mathbf A$ satisfies the Dini condition \eqref{dini}.
Then, there exists a Green's function $G(x,y)\ ( \text{for any } \ x, y \in \Omega, \ x\neq y)$ and it is unique in the following sense:
if $u$ is the unique adjoint solution of the problem \eqref{eq20.40th}, where $f \in L^p(\Omega)$ with $p>1$, then $u$ is represented by
\begin{equation}				\label{eq1747m}
u(y)=\int_\Omega G(x,y) f(x)\,dx.
\end{equation}
Also, $G^\ast(x,y)=G(y,x)$ becomes the Green's function for the adjoint operator $L^\ast$, which is characterized as follows:
for $q>1$ and $f\in L^{q}(\Omega)$,
if $v \in W^{2,q}(\Omega)\cap W^{1,q}_0(\Omega)$ is the strong solution of
\begin{equation}
                                \label{eq10.36}
Lv=f \;\text{ in }\;\Omega,\quad v=0\;\text{ on }\;\partial \Omega,
\end{equation}
then, we have the representation formula
\begin{equation}				\label{eq1748m}
v(y)=\int_\Omega G^\ast(x,y)f(x)\,dx=\int_\Omega G(y,x)f(x)\,dx.
\end{equation}
The Green function $G(x,y)$ satisfies the following  estimates:
\begin{gather}
					\label{eq0528af}
G(\cdot, y) \in \mathsf{BMO}(\Omega)\quad\text{with}\quad \norm{G(\cdot, y)}_{\ast} \le C,\quad  y \in \Omega,\\
					\label{eq0529af}
\abs{G(x,y)} \le C\left( 1+ \log \frac{\diam \Omega}{\abs{x-y}} \right),\quad x \neq y \in \Omega,\\
					\label{eq0530af}
\abs{D_x G(x,y)} \le \frac{C}{\abs{x-y}}, \quad x \neq y \in \Omega,\\
					\label{eq408}
\abs{D_x^2 G(x,y)} \le \frac{C}{\abs{x-y}^2}, \quad y \in \Omega,\ x\in B(y,d_y/2)\setminus \set{y},
\end{gather}
where $d_y=\dist (y,\partial\Omega)$ and $C$ depends on $\nu$, $\Omega$, and the $L^2$ mean oscillation $\omega_{\mathbf A}$ as in \eqref{13.24g}.
\end{theorem}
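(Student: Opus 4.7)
The plan is to construct $G(\cdot,y)$ as a limit of regularised Green's functions $G_\varepsilon(\cdot,y)$ and to pass to the limit using uniform BMO and pointwise bounds. For each $y\in\Omega$ and $\varepsilon\in(0,d_y/2)$, let $G_\varepsilon(\cdot,y)$ be the unique $L^p$-adjoint solution (for some $p>1$, via \cite{EM2016}) of
\[
L^\ast G_\varepsilon(\cdot,y) = \tfrac{1}{\abs{B(y,\varepsilon)}}\chi_{B(y,\varepsilon)} \;\text{in }\Omega,\quad G_\varepsilon(\cdot,y)=0 \;\text{on }\partial\Omega.
\]
The centrepiece is the uniform bound $\norm{G_\varepsilon(\cdot,y)}_\ast \le C$, obtained through $H^1$--BMO duality: for any $H^1$-atom $a$ supported in $B\cap\Omega$, solve $Lv_a=a$ as in \eqref{eq10.36} with $v_a\in W^{2,q}(\Omega)\cap W^{1,q}_0(\Omega)$ for every $q<\infty$; integration by parts yields
\[
\int_\Omega G_\varepsilon(\cdot,y)\,a\,dx \;=\; \int_\Omega v_a\,L^\ast G_\varepsilon(\cdot,y)\,dx \;=\; \fint_{B(y,\varepsilon)} v_a,
\]
so taking the supremum over atoms reduces the BMO estimate to the uniform $H^1\to L^\infty$ bound $\norm{v_a}_\infty \le C$.

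Establishing this $L^\infty$ estimate for solutions of $Lv_a=a$ tested against an $H^1$-atom is the heart of the proof and is where the $L^2$ Dini hypothesis \eqref{13.24g} enters. For an atom supported in a ball $B$ of radius $r$, a Campanato-type iteration using the interior $W^{2,2}$ Calder\'on--Zygmund estimate for $L$ together with the $L^2$ modulus $\omega_{\mathbf A}$ and its Dini summability \eqref{dini} yields a uniform bound for $v_a$ near $B$, and the Aleksandrov maximum principle propagates it to all of $\Omega$. Once $\norm{G_\varepsilon(\cdot,y)}_\ast \le C$ is known, the pointwise logarithmic estimate \eqref{eq0529af} follows from an oscillation iteration on dyadic annuli $A_k=\set{x:2^k\varepsilon\le\abs{x-y}\le 2^{k+1}\varepsilon}$: on each $A_k$ the function $G_\varepsilon(\cdot,y)$ satisfies $L^\ast u=0$ and its oscillation is controlled by a universal constant (via the continuity theorem of \cite{DEK18} combined with the BMO bound), so summing the $O(\log(\diam\Omega/\abs{x-y}))$ annuli and anchoring by the zero boundary trace produces \eqref{eq0529af}. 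The derivative bounds \eqref{eq0530af} and \eqref{eq408} then follow from rescaled interior gradient and Hessian estimates for $L^\ast u=0$ with Dini mean oscillation coefficients, again from \cite{DEK18}, applied on $B(x,\abs{x-y}/4)$ and on $B(y,d_y/2)$ respectively.

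The limit $G(\cdot,y)=\lim_{\varepsilon\to 0}G_\varepsilon(\cdot,y)$ is then extracted using these uniform estimates; the representation \eqref{eq1747m} is obtained by testing \eqref{eq20.40th} against $G_\varepsilon$ and passing to the limit, and since \eqref{eq1747m} must hold for every $f\in L^p(\Omega)$ it pins $G(\cdot,y)$ down uniquely. The symmetry $G^\ast(x,y)=G(y,x)$ and the formula \eqref{eq1748m} follow by pairing two regularised Green's functions against \eqref{eq10.36} and \eqref{eq20.40th} in two different orders. The main obstacle is the $H^1\to L^\infty$ step: in dimensions $n\ge 3$, \cite{HK20} handles the analogous estimate via Sobolev embedding applied to $L^p$ bounds coming from the atomic structure, and at $n=2$ this approach degenerates into a logarithmic loss. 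This is exactly why Theorem~\ref{thm1} imposes the stronger $L^2$ version \eqref{13.24g} of the Dini hypothesis: only the $L^2$ cancellation of an $H^1$-atom pairs cleanly with the $L^2$ modulus $\omega_{\mathbf A}$, closing the estimate without any logarithmic deficit.
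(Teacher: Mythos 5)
There is a genuine gap, and it sits exactly at the step you call the heart of the proof. You have set the duality up in the transposed direction. In the theorem, \eqref{eq1747m} forces $G(\cdot,y)$ to solve the \emph{non-divergence} equation in its first argument ($L_xG(x,y)=\delta_y$), so the correct regularization is $LG_\epsilon(\cdot,y)=\abs{\Omega(y,\epsilon)}^{-1}\chi_{\Omega(y,\epsilon)}$, a strong $W^{2,2}$ solution, and the atom must be fed into the \emph{adjoint} problem $L^\ast u=a$; the uniform bound one then needs is $\norm{u}_{L^\infty}\le N_0$ for adjoint solutions with atom data (this is the paper's Theorem~\ref{thm2}, whose proof occupies all of Section~4). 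You instead define $G_\varepsilon(\cdot,y)$ by $L^\ast G_\varepsilon(\cdot,y)=\abs{B(y,\varepsilon)}^{-1}\chi_{B(y,\varepsilon)}$ and pair it with strong solutions $Lv_a=a$, so the estimate you need is $\norm{v_a}_{L^\infty}\le C$ uniformly over atoms, which by \eqref{eq1748m} is equivalent to a uniform BMO bound on $G$ in its \emph{second} argument. That is a different statement from \eqref{eq0528af}, it is nowhere claimed in the theorem, and your one-line justification does not reach it: Aleksandrov gives only $\norm{v_a}_{L^\infty}\lesssim\norm{a}_{L^2}\sim r^{-1}$, and to exploit the atom's cancellation you would need a Lipschitz-type estimate for the kernel in the integration variable, i.e.\ $C^{0,1}$-regularity of adjoint solutions ($x\mapsto G(y,x)$ solves the double-divergence equation), which Dini mean oscillation of $\mathbf A$ does not provide (\cite{DEK18} gives only continuity). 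Moreover, a Campanato scheme for the forward equation does not close the way it does for the adjoint: freezing coefficients in $L v_a=a$ produces the error $(\bar a^{ij}-a^{ij})D_{ij}v_a$, which is not controlled by $\norm{v_a}_{L^\infty}$, whereas for $L^\ast u=a$ the error $D_{ij}\bigl((\bar a^{ij}-a^{ij})u\bigr)$ is controlled by $\omega_{\mathbf A}(r)\norm{u}_{L^\infty}$ via the $L^2$ adjoint theory (this is where \eqref{13.24g} really enters, as in \eqref{eq1019sat}), and the atom's cancellation is used against the smooth constant-coefficient kernel through \eqref{green1}--\eqref{green2}.

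The transposition also propagates into the rest of your argument. Since your $G_\varepsilon(\cdot,y)$ and the solution $u$ of \eqref{eq20.40th} are both adjoint solutions, you cannot integrate by parts between them, so "testing \eqref{eq20.40th} against $G_\varepsilon$" does not yield \eqref{eq1747m}; what your construction produces in the limit is the Green's function of $L^\ast$, i.e.\ $G(y,\cdot)$, and your uniform bound (even if it were proved) would be BMO in the wrong variable for \eqref{eq0528af}. Likewise, the derivative bounds \eqref{eq0530af} and \eqref{eq408} concern $D_xG(x,y)$ where $G(\cdot,y)$ satisfies $LG(\cdot,y)=0$ away from $y$, so they come from interior/boundary $C^1$ and $C^2$ estimates for the non-divergence operator (\cite{DK17,DEK18}) together with $W^{2,p}$ theory; in your setup the first variable of the constructed object satisfies the adjoint equation, for which no gradient or Hessian estimates are available under the hypotheses. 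To repair the proof you must flip the roles: regularize the forward problem, prove the uniform $L^\infty$ bound for adjoint solutions with atom data (the content of Theorem~\ref{thm2}, via the decomposition $u=v+w+\tilde w$, the decay iteration, and the Green's-kernel lemma), deduce \eqref{eq0528af} and \eqref{eq1747m} by $H^1$--$\mathsf{BMO}$ duality and weak-$\ast$ compactness, and only then obtain $G^\ast(x,y)=G(y,x)$ and \eqref{eq1748m} by pairing the two regularized families.
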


In our second theorem, we drop the extra assumption that $\mathbf A$ is of $L^2$ Dini mean oscillation but we shall instead assume that $\Omega$ is a $C^{2,\alpha}$ domain for some $\alpha>0$.
\begin{theorem}				\label{thm3}
Let $\Omega$ be a bounded $C^{2,\alpha}$ domain in $\bR^2$.
Assume the coefficient $\mathbf{A}=(a^{ij})$ of the operator $L$ in \eqref{master} satisfies the uniform ellipticity condition \eqref{ellipticity} and is of Dini mean oscillation in $\Omega$.
Then, all conclusions of Theorem~\ref{thm1} are valid.
Moreover, we have
\begin{equation}		\label{eq0531af}
\abs{D_x^2 G(x,y)} \le C \abs{x-y}^{-2},\quad x \neq y \in \Omega,
\end{equation}
where  $C$ depends on $\nu$, $\Omega$, and the mean oscillation $\omega_{\mathbf A}$ as in \eqref{13.24f}.
\end{theorem}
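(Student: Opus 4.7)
The plan is to follow the proof of Theorem~\ref{thm1} step by step, leveraging the extra $C^{2,\alpha}$ regularity of $\partial\Omega$ to compensate for the weaker hypothesis on $\mathbf A$. In Theorem~\ref{thm1}, the $L^2$ mean oscillation of $\mathbf A$ was used in certain local Campanato-type estimates for strong solutions of $Lu=f$; on a $C^{2,\alpha}$ domain, the analogous estimates are available \emph{globally}, with constants depending only on the $L^1$ modulus $\omega_{\mathbf A}$ of \eqref{13.24f}, via the global $W^{2,p}$ and pointwise $D^2$ results of \cite{DEK18}. Retracing the derivation of \eqref{eq0528af}--\eqref{eq408} with the $L^1$ modulus in place of the $L^2$ modulus then yields those bounds in the present setting.

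I would make this rigorous by approximation. Let $\mathbf A^{(\varepsilon)}:=\mathbf A*\eta_\varepsilon$ (with a suitable extension of $\mathbf A$ to $\bR^2$ preserving \eqref{ellipticity}); each $\mathbf A^{(\varepsilon)}$ is smooth, so Theorem~\ref{thm1} produces Green's functions $G^{(\varepsilon)}$. Since $\omega_{\mathbf A^{(\varepsilon)}}(r)\le C\,\omega_{\mathbf A}(r+\varepsilon)$, the previous paragraph shows that the bounds \eqref{eq0528af}--\eqref{eq408} for $G^{(\varepsilon)}$ are uniform in $\varepsilon$. Compactness then supplies a limit $G$: interior derivative estimates give equicontinuity on any compact subset of $(\Omega\times\Omega)\setminus\{(x,x)\}$, the weak-$\ast$ compactness of $\mathsf{BMO}$ preserves \eqref{eq0528af}, and the $L^p$-stability of adjoint and of strong solutions with respect to the coefficients (again on $C^{2,\alpha}$ domains, from \cite{DEK18}) delivers the representation formulas \eqref{eq1747m}--\eqref{eq1748m}.

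For the new global bound \eqref{eq0531af}, fix $y\in\Omega$ and $x\neq y$, and set $r:=|x-y|/4$. Since $B(x,2r)$ is disjoint from a neighbourhood of $y$, $G(\cdot,y)$ is a strong solution of $L u=0$ in $\Omega(x,2r)$ with vanishing trace on $\partial\Omega\cap B(x,2r)$. Using \eqref{eq0530af}, one shows $\sup_{\Omega(x,2r)}\abs{G(\cdot,y)-c_{x,y}}\le C$ for an appropriate constant $c_{x,y}$ (take $c_{x,y}=0$ when $B(x,2r)$ meets $\partial\Omega$, since the boundary trace is zero and the gradient bound can be integrated from $\partial\Omega$; take $c_{x,y}=G(x,y)$ in the interior case, where the gradient bound together with $\abs{z-y}\ge 3r$ on $B(x,r)$ gives oscillation $\le C$). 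Applying the scale-invariant pointwise $C^2$ estimate up to a $C^{2,\alpha}$ portion of the boundary for $L$-harmonic functions with Dini mean oscillation coefficients, we obtain
\[
\abs{D_x^2 G(x,y)}\le \frac{C}{r^2}\sup_{\Omega(x,2r)}\abs{G(\cdot,y)-c_{x,y}}\le \frac{C}{|x-y|^2}.
\]
The main technical obstacle is the first paragraph: carefully verifying that every $L^2$-flavoured step in the proof of Theorem~\ref{thm1} admits an $L^1$-flavoured replacement once $\partial\Omega$ is upgraded from $C^{1,1}$ to $C^{2,\alpha}$. This is essentially bookkeeping, but it is where the Dini structure of $\mathbf A$ interacts most delicately with the boundary regularity.
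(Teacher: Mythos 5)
There is a genuine gap, and it sits exactly where you wave it away as ``bookkeeping.'' Your approximation scheme needs the bounds of Theorem~\ref{thm1} for $G^{(\varepsilon)}$ to be \emph{uniform in} $\varepsilon$, but the constants in Theorem~\ref{thm1} depend on the $L^2$ mean oscillation \eqref{13.24g} of the coefficients, and mollification does not convert the $L^1$ Dini modulus \eqref{13.24f} into a uniformly Dini $L^2$ modulus: since $\mathbf A$ is merely bounded, the best generic bound is $\omega^{(2)}_{\mathbf A^{(\varepsilon)}}(r)\lesssim \bigl(\omega^{(1)}_{\mathbf A}(Cr)\bigr)^{1/2}$ (for $r\gtrsim\varepsilon$), and the square root of a Dini function need not be Dini (e.g.\ $\omega(t)=\log^{-2}(1/t)$). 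So the claimed uniformity of \eqref{eq0528af}--\eqref{eq408} for $G^{(\varepsilon)}$ does not follow, and the compactness step has nothing uniform to pass to the limit with. What is actually needed --- and what the paper supplies --- is a version of Theorem~\ref{thm2} (the uniform $L^\infty$ bound for adjoint solutions with $H^1$-atom data) whose constant depends only on the $L^1$ modulus. You also mislocate where the $L^2$ hypothesis enters: it is not used in Campanato estimates for strong solutions of $Lu=f$, but in the $L^2$ duality estimate \eqref{eq1019sat} for the piece $\tilde w$ solving a \emph{constant-coefficient adjoint (double-divergence form)} equation with right-hand side $D_{ij}\bigl((\bar a^{ij}-a^{ij})u\chi\bigr)$. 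Under the weaker hypothesis this is replaced not by routine substitution but by the weak type-$(1,1)$ estimate \eqref{eq1544w} for $L_0^\ast$ --- this is precisely where the $C^{2,\alpha}$ regularity of $\partial\Omega$ is used --- which only yields an $L^p$ bound for $p<1$ (see \eqref{eq1019}), forcing the entire decay iteration to be redone with $L^{1/2}$ quasi-norms: $\phi(x_0,r)$ is redefined via $\bigl(\fint |u-c|^{1/2}\bigr)^2$, the comparison \eqref{eq13.46f} and the telescoping \eqref{eq0756w}--\eqref{eq13.13} are replaced by quasi-norm analogues with minimizing constants $q_{x_0,r}$. None of this is present in your proposal, and it is the mathematical core of Theorem~\ref{thm3}.

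By contrast, your treatment of the new estimate \eqref{eq0531af} is essentially the paper's: interior pointwise $C^2$ estimates (from \cite{DK17}) away from $\partial\Omega$ and boundary $C^2$ estimates on $C^{2,\alpha}$ domains (from \cite{DEK18}) applied to $G(\cdot,y)$ on a ball of radius comparable to $\abs{x-y}$, normalized by a constant. The paper controls the resulting local average of $\abs{G(\cdot,y)-c}$ directly by the BMO bound \eqref{eq0528af} (with the convention that the average is $0$ for boundary-intersecting balls), which is cleaner than integrating the gradient bound \eqref{eq0530af} from the boundary; your route for the boundary case would require checking that the path from a point of $\Omega(x,2r)$ to $\partial\Omega$ stays away from $y$, a geometric verification you have not supplied. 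But this part is repairable; the unproved uniform estimate of Theorem~\ref{thm2} under the $L^1$ Dini condition is not.
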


Our last theorem is the key to the proof of Theorems \ref{thm1} and \ref{thm3}, the statement of which has its own interest.
\begin{theorem}			\label{thm2}
Let $\Omega \subset \bR^2$ and the coefficient $\mathbf A$ satisfy the conditions of  Theorem~\ref{thm1} (resp. Theorem~\ref{thm3}) with $\omega_{\mathbf A}$ given by the formula \eqref{13.24g} (resp. \eqref{13.24f}).
Then, there exists a constant $N_0=N_0(\nu, \Omega, \omega_{\mathbf A})$ such that if $u$ is the adjoint solution of the problem
\[
L^* u=a\;\text{ in }\;\Omega,\quad u=0\;\text{ on }\;\partial\Omega,
\]
where $a$ is an $H^1$ atom in $\Omega$, then $\abs{u} \le N_0$.
\end{theorem}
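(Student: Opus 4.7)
\smallskip
\noindent\textbf{Proof proposal.} The plan is to reduce the pointwise bound for the adjoint solution $u$ to a uniform $\mathsf{BMO}$ estimate for approximate Green's functions of the forward operator $L$, and exploit the cancellation of the $H^1$ atom via $H^1$-$\mathsf{BMO}$ duality. Fix $y_0\in\Omega$ at which we wish to estimate $u(y_0)$, and let $\eta_\epsilon$ be a smooth nonnegative approximation of $\delta_{y_0}$, supported in $B_\epsilon(y_0)\Subset\Omega$ with $\int\eta_\epsilon=1$. Under our hypotheses on $\mathbf{A}$ and $\Omega$, the standard $W^{2,p}$ theory (with Dini mean oscillation coefficients) produces a unique strong solution $v_\epsilon\in W^{2,p}(\Omega)\cap W^{1,p}_0(\Omega)$ of $Lv_\epsilon=\eta_\epsilon$ in $\Omega$, $v_\epsilon=0$ on $\partial\Omega$. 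The defining identity of the adjoint solution, applied to the test function $v_\epsilon$, gives
\[
\int_\Omega u\,\eta_\epsilon\,dx=\int_\Omega a\,v_\epsilon\,dx.
\]
Since $u$ is continuous up to the boundary by \cite{DEK18}, the left-hand side converges to $u(y_0)$ as $\epsilon\to 0$.

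Now I use the atom structure. Assume $a$ is supported in $\Omega(x_0,r):=B_r(x_0)\cap\Omega$ with $\|a\|_\infty\le |\Omega(x_0,r)|^{-1}$ and $\int a=0$ (the case where $B_r$ meets $\partial\Omega$ and the cancellation is not required is easier, as the atom has no singularity). Choosing $c_\epsilon=\bar{v_\epsilon}_{\Omega(x_0,r)}$ and using $\int a=0$,
\[
\Bigabs{\int_\Omega a\,v_\epsilon\,dx}=\Bigabs{\int_{\Omega(x_0,r)} a\,(v_\epsilon-c_\epsilon)\,dx}\le\fint_{\Omega(x_0,r)}|v_\epsilon-c_\epsilon|\,dx\le C\,\|v_\epsilon\|_{\ast},
\]
so that $|u(y_0)|\le C\sup_\epsilon\|v_\epsilon\|_{\mathsf{BMO}(\Omega)}$. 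Thus the theorem reduces to the uniform estimate
\[
\sup_{\epsilon>0,\,y_0\in\Omega}\|v_\epsilon\|_{\mathsf{BMO}(\Omega)}\le N_0=N_0(\nu,\Omega,\omega_{\mathbf{A}}),
\]
which is the quantitative $\mathsf{BMO}$ analogue of the fact that, for the Laplacian in two dimensions, the Green's function $\frac{1}{2\pi}\log|x-y_0|$ lies in $\mathsf{BMO}$ with norm independent of $y_0$.

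To prove this uniform bound, I would estimate $\fint_B|v_\epsilon-\bar v_\epsilon|$ over arbitrary balls $B=B_\rho(z)\subset\Omega$ by splitting into regimes: (i) if $B$ is well separated from $y_0$, then $Lv_\epsilon=0$ on $B$ for small $\epsilon$ and I use interior $C^{1,\mathrm{Dini}}$/Campanato estimates coming from the Dini mean oscillation hypothesis on $\mathbf A$, rescaled to $B$, together with the uniform $L^1$ bound $\|v_\epsilon\|_{L^1(\Omega)}\le C$ obtained by testing against solutions of $Lw=\mathrm{sgn}\,v_\epsilon$ (i.e.\ $L^\infty$-to-$L^1$ duality for $L^*$); (ii) if $y_0\in 2B$, rescaling to the unit ball and applying $W^{2,p}$ estimates for $L$ controls $\fint_B|v_\epsilon-\bar v_\epsilon|$ by a geometric series summable thanks to the Dini condition; (iii) boundary balls are handled by the global $W^{2,p}$ and boundary Dini estimates for $C^{1,1}$ (resp.\ $C^{2,\alpha}$) domains. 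Summing over the dyadic scales $2^k\epsilon\le |x-y_0|\le\diam\Omega$ yields the logarithmic growth of $v_\epsilon$ while keeping its $\mathsf{BMO}$ seminorm bounded by a constant depending only on $\nu,\Omega,\omega_{\mathbf{A}}$.

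The main obstacle is the uniform $\mathsf{BMO}$ estimate itself, and specifically the control of oscillations on balls centered near $y_0$, where $v_\epsilon$ is unbounded in $L^\infty$ as $\epsilon\to 0$. In dimensions $n\ge 3$ one bypasses this by a direct $L^\infty$ bound via $W^{2,p}\hookrightarrow L^\infty$ for $p>n/2$, but in $n=2$ the endpoint embedding fails and one really needs to capture the logarithmic blow-up quantitatively through $\mathsf{BMO}$. The Dini condition \eqref{dini} enters precisely at the summation of scales, ensuring that the perturbation incurred at each dyadic annulus by the oscillation of $\mathbf A$ contributes a summable error rather than a geometric blow-up.
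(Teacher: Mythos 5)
Your reduction via duality is legitimate as far as it goes: testing the adjoint identity with the strong solution $v_\epsilon$ of $Lv_\epsilon=\eta_\epsilon$ gives $\int_\Omega u\,\eta_\epsilon=\int_\Omega a\,v_\epsilon$, and with the paper's $\Omega$-adapted $\mathsf{BMO}$ norm (mean taken to be $0$ on boundary balls, matching the atom's lack of cancellation there because $v_\epsilon$ vanishes on $\partial\Omega$) one indeed gets $\abs{u(y_0)}\le C\sup_\epsilon\norm{v_\epsilon}_{\ast}$. But this only transposes the theorem into its dual form: the uniform bound $\sup_\epsilon\norm{v_\epsilon}_{\ast}\le N_0$ is precisely the estimate \eqref{bmo_bound}, which in the paper is \emph{derived from} Theorem~\ref{thm2} by $H^1$--$\mathsf{BMO}$ duality. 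All of the analytic content of the theorem is therefore concentrated in the step you sketch in one paragraph, and that sketch does not hold up as stated. In regime (ii), a $W^{2,p}$ estimate on a ball containing $\supp\eta_\epsilon$ degenerates as $\epsilon\to0$ (since $\norm{\eta_\epsilon}_{L^p}\to\infty$ for $p>1$), so it cannot by itself produce an $\epsilon$-independent bound on $\fint_B\abs{v_\epsilon-\bar v_\epsilon}$; one needs a scale-by-scale comparison with a frozen-coefficient Green's function (whose logarithm is what makes the 2D $\mathsf{BMO}$ bound true) and a summation of the coefficient-oscillation errors via the Dini integral. In regime (i), the interior estimate only transfers oscillation from $B_\rho$ to $B_{2\rho}$; iterating it up to the scale $\abs{z-y_0}$ requires an anchor at the top scale, and the uniform $L^1$ bound $\norm{v_\epsilon}_{L^1}\lesssim1$ gives only $\fint_{B_\rho}\abs{v_\epsilon}\lesssim\rho^{-2}$, which is far too weak; moreover your route to that $L^1$ bound is written with the duality backwards (one must test $v_\epsilon$ against adjoint solutions $L^\ast\phi=g$ with $\norm{g}_{L^\infty}\le1$ and invoke an $L^\infty$ bound for adjoint solutions, not solve $Lw=\operatorname{sgn}v_\epsilon$). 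So there is a genuine gap: the multi-scale perturbation argument is asserted, not carried out.

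For comparison, the paper proves the bound on $u$ directly, avoiding any a priori information on $v_\epsilon$: it freezes the coefficients on each ball, decomposes $u=v^{(r)}+w^{(r)}+\tilde w^{(r)}$, controls the commutator piece $\tilde w$ by $\omega_{\mathbf A}(r)\norm{u}_{L^\infty}$ (estimate \eqref{eq1019sat}, or its weak-$(1,1)$ substitute \eqref{eq1019} in the $C^{2,\alpha}$ case), runs a Campanato iteration \eqref{eq16.57m}, and — this is the heart, Lemma~\ref{lem-key} — shows that the contributions of the atom through the frozen-coefficient pieces $w^{(\kappa^j r)}$ are summable over all scales, using the constant-coefficient Green's function bounds \eqref{green1}--\eqref{green2} together with the atom's size and cancellation. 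The $\norm{u}_{L^\infty}$ terms are then absorbed by the smallness of $\int_0^r\omega_{\mathbf A}(t)/t\,dt$, and the remaining $\norm{u}_{L^1}$ is bounded by the elementary duality Lemma~\ref{lem-global}. If you want to pursue your dual route, you would have to reproduce an argument of exactly this depth for $v_\epsilon$ (decompose against the frozen-coefficient Green's function at each dyadic scale between $\epsilon$ and $\diam\Omega$, track the log, and sum the $\omega_{\mathbf A}$ errors), so nothing is gained; as written, the proposal leaves the essential estimate unproved.
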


The proof of the theorem heavily relies on the assumption that $\Omega \subset \bR^2$ and cannot be applied to higher dimension.
As a matter of fact, Theorem~{\ref{thm2}} is not true in higher dimensions since otherwise it would imply that the Green's function $G(\cdot, y)$ belongs to $\mathsf{BMO}(\Omega)$, which is not true in higher dimensions.
For example,  consider the Green's function $G(\cdot, y)$ for Laplace operator in $\Omega=B(0,1) \subset \bR^n$ with $n \ge 3$.
It does not belong to $L^p(\Omega)$ for $p\ge \frac{n}{n-2}$ and hence cannot belong to $\mathsf{BMO}(\Omega)$.

We conclude the introduction with a few remarks.
First of all, the Green's function $G(x,y)$ is continuous on $\set{(x,y) \in \overline\Omega \times \overline \Omega: x\neq y}$.
Indeed, the proof of Theorem~\ref{thm1} will show that $L^\ast G(x, \cdot)=0$ in $\Omega \setminus B(x, r)$ for any $r>0$ and thus by \cite[Theorem~1.8]{DEK18}, we see that $G(x,\cdot)$ is continuous away from $x$. On the other hand, it is clear from \eqref{eq0530af} that $G(\cdot, y)$ is continuous away from $y$.

Next, it should be mentioned that Bauman \cite{Bauman85} proved an estimate for the normalized Green's function $\tilde G(x,y)=G(x,y)/G(x_0,y)$:
\begin{equation}
                \label{eq1.54}
\tilde G(x,y)\simeq \int_{\abs{x-y}}^1 \, \frac{r}{w(B(y,r))}\,dr,
\end{equation}
where $x$, $y\in B'$, $\abs{x-y} \in (0,\frac12)$, $x_0\in B\setminus \overline{B'}$, $B'\Subset B$ are open balls,
\[
w(E)=\int_E G(x_0,y)\,dy,
\]
and $G$ is the Green's function in $B$.
Such result was obtained under the condition that the coefficients are bounded and measurable in two dimensions, and uniformly continuous in higher dimensions.
Her proof uses the maximum principle. When the coefficients are of Dini mean oscillation, by using the Harnack type inequality \cite[Lemma 4.2]{DEK18}, we see that $G(x_0,y)\simeq 1$ for any $y$ away from $x_0$ and the boundary $\partial B$, which together with \eqref{eq1.54} implies that
\[
G(x,y) \simeq \log \frac{1}{\abs{x-y}},\quad x \neq y \in B,\quad \abs{x-y}<1/2.
\]
In view of the comparison principle, this result also holds for any bounded smooth domain.
Compared to {\cite{Bauman85}}, we prove a stronger result $G(\cdot, y) \in \mathsf{BMO}(\Omega)$, which gives more information of the Green's function and also implies {\eqref{eq0529af}}.
We also note that our proof also works for elliptic systems satisfying the strong ellipticity condition or the Legendre-Hadamard condition.
More precisely, consider an elliptic system
\[
\sum_{j=1}^N L_{ij} u^j :=\sum_{j=1}^N \sum_{\alpha, \beta=1}^2 A^{\alpha\beta}_{ij} D_{\alpha \beta} u^j - \lambda u^i,\quad i=1,\ldots, N.
\]
Suppose the coefficients $\mathbf{A}=(A^{\alpha\beta}_{ij})$ are bounded, satisfies the Legendre-Hadamard condition
\[
\sum_{\alpha,\beta=1}^2 \sum_{i,j=1}^N A^{\alpha\beta}_{ij} \xi_\alpha\xi_\beta \eta^i \eta^j \ge \nu \abs{\xi}^2 \abs{\eta}^2.
\]
Assume that  $\lambda$ is large enough to guarantee the solvability of Dirichlet problem
\[
L_{ij} u^j = f^i\;\text{ in }\;\Omega,\quad u^i=0\;\text{ on }\;\partial\Omega.
\]
See, for instance, \cite[Theorem 8]{DK11}.
Then, under hypothesis of Theorem~\ref{thm1} (resp. Theorem~\ref{thm3}), there exists an $N \times N$ Green's matrix $\vec G(x,y)=(G_{ij}(x,y))$ that satisfy the conclusions of Theorem~\ref{thm1} (resp. Theorem~\ref{thm3}).
The proof requires only routine adjustment and is omitted.

Finally, we remark that in Theorems~{\ref{thm1}} and {\ref{thm3}}, the dependence of the constant $C$ on $\Omega$ is through the constant $N_0$ in Theorem~{\ref{thm2}}, the constant that is hidden in $H^1(\Omega)$ and $\mathsf{BMO}(\Omega)$ duality relation {\eqref{h1-bmo}}, and the constant that appears in the $W^{2,p}$ estimates, which in turn may depend on the $C^{1,1}$ characteristics of the boundary flattening mappings and the diameter of $\Omega$.
The constant $C$ in {\eqref{eq0531af}} of Theorem~{\ref{thm3}} depends additionally on the $C^{2,\alpha}$ bounds on the boundary flattening mappings.
As the proof of Theorem~{\ref{thm2}} reveals, the constant $N_0$ depends on $\Omega$ through the $W^{2,p}$ estimates, the diameter of $\Omega$, and the $C^{2,\alpha}$ characteristics of $\partial\Omega$.
However, the $C^{2,\alpha}$ regularity of the domain $\Omega$ is  used only for the weak type-$(1,1)$ estimate {\eqref{eq1544w}}, which can be dispensed with if we assume the $L^2$ mean oscillation of $\mathbf A$ satisfies the Dini condition.
In fact, in Theorem~{\ref{thm1}}, where $L^2$ Dini mean oscillation condition is imposed on $\mathbf A$, the $C^{1,1}$ regularity condition on $\Omega$ can be relaxed further.
This condition is essentially used only for the $W^{2,p}$-solvability of \eqref{eq10.36} and the $L^p$-solvability of \eqref{eq20.40th}.
The $L^p$-solvability of \eqref{eq20.40th} follows from the $W^{2,p}$-solvability of \eqref{eq10.36} by using the duality argument in \cite{Esc2000}.
For the $W^{2,p}$-solvability of \eqref{eq10.36} in a bounded domain $\Omega\subset \bR^n$, we only require $\Omega$ to be in $C^{1,\alpha}$, where $\alpha>1-1/\max\{n,p\}$. Thus, in our case by taking $p=2$, it suffices to assume $\Omega$ to be in $C^{1,\alpha}$, where $\alpha>1/2$. We cannot find an explicit reference for this result, so we sketch a proof in Appendix.
It should be mentioned that this argument uses Alexandrov maximum principle and is not applicable to elliptic systems, that is, $C^{1,1}$ regularity requirement on $\Omega$ in Theorem~{\ref{thm1}} cannot be lifted for elliptic systems.
It is also worth noting that in the two dimensional case, the $W^{2,p}$-solvability of \eqref{eq10.36} is also available when $p$ is close to $2$ and $\Omega$ is a bounded convex domain; see, for instance, \cite{Ca67}.
Thus, Theorem~\ref{thm1} also hold when $\Omega$ is bounded and convex.

\section{Notation}

For $x_0\in \bR^n$ and $r>0$, we denote by $B(x_0,r)$ the Euclidean ball with radius $r$ centered at $x_0$, and denote
\[
\Omega(x_0,r):=\Omega \cap B(x_0,r).
\]

We define $\mathsf{BMO}(\Omega)$ as the set of functions $u$ such that
\[
\norm{u}_\ast:=\sup \Set{\fint_{\Omega(x_0,r)} \abs{u-\bar u_{x_0,r}}: x_0\in \overline \Omega,\;\;r>0}
\]
is finite, where we set
\begin{equation}					\label{eq1244th}
\bar u_{x_0,r}:=\begin{cases}
			0 & \text{if } r \ge \dist(x_0, \partial \Omega)\\
			\fint_{\Omega(x_0,r)} u & \text{if }r < \dist(x_0, \partial\Omega).
  \end{cases}
\end{equation}
We shall say that a bounded measurable function $a$ is an atom for $\Omega$ if $a$ is supported in $\Omega(x_0,r)$ for some $x_0 \in \overline\Omega$ and $r>0$ and satisfies
\begin{equation*}
\norm{a}_{L^\infty(\Omega)} \le \frac{1}{\abs{\Omega(x_0,r)}}\quad\text{and}\quad \bar a_{x_0,r} =0.
\end{equation*}
Notice that the latter condition requires $\fint_{\Omega(x_0,r)} a =0$ only if $r <\dist(x_0, \partial\Omega)$.
A function $f$ is in the atomic Hardy space $H^1(\Omega)$ if there is a sequence of atoms $\set{a_i}_{i=1}^\infty$ and a sequence of real numbers $\set{\lambda_i} \in \ell^1$ so that $f=\sum_{i=1}^\infty \lambda_i a_i$.
We define the norm on this space by
\[
\norm{f}_{H^1(\Omega)}=\inf \Set{\sum_{i=1}^\infty\, \abs{\lambda_i}: f=\sum_{i=1}^\infty \lambda_i a_i}.
\]
We note that the expression
\begin{equation}				\label{h1-bmo}
\sup \Set{\int_\Omega a u \,dy: a\text{ is an atom for }\Omega}
\end{equation}
gives an equivalent norm on $\mathsf{BMO}(\Omega)$ and that $\mathsf{BMO}(\Omega)$ may be identified with the dual of the atomic Hardy space $H^1(\Omega)$.
It may seem that our definition of $\mathsf{BMO}(\Omega)$ differs from those in other literatures \cite{Chang94, Jones80, Miyachi90, TKB13} but since we assume that $\Omega$ is at least a $C^{1,\alpha}$ or a bounded convex domain, it is the same.
In particular, $\mathsf{BMO}(\Omega)$ can be identified with the subspace
$\set{f \in \mathsf{BMO}(\bR^2): \supp f \subset \overline \Omega}$, with equivalent norms.

We say that $\Omega$ is a $C^{1,\alpha}$ (resp. $C^{2,\alpha}$) domain if each point on $\partial\Omega$ has a neighborhood in which $\partial\Omega$ is the graph of a $C^{1,\alpha}$ (resp. $C^{2,\alpha}$) function for some $\alpha \in (0,1]$.

\section{Proof of Theorems~\ref{thm1} and \ref{thm3}}			
In dimension three and higher, the strategy of \cite{HK07} was used in \cite{HK20} to construct Green's function but it does not work in two dimensions.
Here we follow a duality argument in \cite{TKB13}.

For $y \in \Omega$ and $\epsilon>0$, let $v=G_\epsilon(\cdot,y) \in W^{2,2}(\Omega) \cap W^{1,2}_0(\Omega)$ be a unique strong solution of the problem
\begin{equation}			\label{eq1655th}
Lv=\frac{1}{\abs{\Omega(y,\epsilon)}} \, \chi_{\Omega(y,\epsilon)}\;\text{ in }\;\Omega,\quad v=0\;\text{ on }\;\partial\Omega.
\end{equation}
Since $\mathbf A$ is uniformly continuous in $\Omega$ with its modulus of continuity controlled by $\omega_{\mathbf A}$ (see \cite[Appendix]{HK20}), the unique solvability of the problem \eqref{eq1655th} is a consequence of standard $L^{p}$ theory.
Next, for an $H^1$ atom $a$ in $\Omega$, consider the adjoint problem
\[
L^\ast u = a\;\text{ in }\;\Omega,\quad u=0 \;\text{ on }\;\partial \Omega.
\]
By \cite[Lemma~2]{EM2016}, there exists a unique adjoint solution $u$ in $L^2(\Omega)$, and we have
\[
\fint_{\Omega(y,\epsilon)} u = \int_\Omega a G_\epsilon(\cdot, y).
\]
Then, by Theorem~\ref{thm2}, we have
\[
\Abs{\int_\Omega G_\epsilon(x,y) a(x)\,dx} \le N_0.
\]
Therefore, by the $H^1$ and $\mathsf{BMO}$ duality, we find that the BMO norm of $G_\epsilon(\cdot, y)$ is uniformly bounded, i.e.,
\begin{equation}			\label{bmo_bound}
\norm{G_\epsilon(\cdot,y)}_{*}\le C_0
\end{equation}
for some constant $C_0$ depending only on  $\nu$, $\Omega$,  and $\omega_{\mathbf A}$.
The Banach-Alaoglu theorem gives that for each $y$, there is a sequence $\set{\epsilon_j}$ with $\lim_{j\to \infty} \epsilon_j=0$ and a function $G(\cdot, y) \in \mathsf{BMO}(\Omega)$ so that $G_{\epsilon_j}(\cdot, y)$ converges to $G(\cdot, y)$ in the weak-$\ast$ topology of $\mathsf{BMO}(\Omega)$.
Let us fix a $f \in L^p(\Omega)$ with $p>1$ and let $u$ be the unique adjoint solution of \eqref{eq20.40th}.
Then we have
\begin{equation}				\label{eq0141tu}
\fint_{\Omega(y,\epsilon)} u(x)\,dx = \int_\Omega  G_\epsilon(x, y) f(x)\,dx.
\end{equation}
Since $u$ is continuous in $\overline \Omega$ by \cite[Theorem~1.8]{DEK18}, the left- hand side of \eqref{eq0141tu} converges to $u(y)$.
Since $L^p(\Omega) \subset H^1(\Omega)$ for all $p>1$, we obtain the representation \eqref{eq1747m}.
This gives us that $G(\cdot, y)$ is the Green's function with pole at $y$.
Therefore, by using \eqref{bmo_bound}, we obtain \eqref{eq0528af}.

If we choose any sequence  $\set{\epsilon_k}$ with $\lim_{k\to \infty} \epsilon_k=0$, the above argument gives a subsequence of $G_{\epsilon_k}(\cdot, y)$ which converges to a Green's function.
As the Green's function is unique, the limit must be the function $G(\cdot, y)$.
This implies that the entire family $\set{G_\epsilon(\cdot, y)}_\epsilon$ converges to $G(\cdot, y)$ in the weak-$\ast$ topology of $\mathsf{BMO}(\Omega)$.
By the $W^{2,p}$ estimate for non-divergence form elliptic equations with continuous coefficients, we see from \eqref{eq1655th} that $G_\epsilon(\cdot, y)$ satisfies
\[
\norm{G_\epsilon(\cdot,y)}_{W^{2,2}(\Omega\setminus \Omega(y, r))} \le C(r),\quad r>2\epsilon.
\]
This estimate will also hold for the limit and we see that $G(\cdot,y) \in W^{2,2}(\Omega\setminus B(y, r))$ and $L G(\cdot, y)=0$ in $\Omega\setminus B(y, r)$ for any $r>0$. Moreover, we have $G(\cdot, y)=0$ on $\partial\Omega$.

Next, we show the pointwise bound \eqref{eq0529af}.
For $x_0 \neq y \in \Omega$, we set \[r:=\tfrac12 \,\abs{x_0-y}.\]
In the case when $r\ge \dist(x_0, \partial\Omega)$, we can find a point $\hat x_0 \in \partial\Omega$ such that
\[
\Omega(x_0, r) \subset \Omega(\hat x_0, 2r) \subset \Omega\setminus \set{y}.
\]
Since $LG(\cdot,y) =0$ in $\Omega(\hat x_0,2r)$ and $G(\cdot, y)$ vanishes on $\partial\Omega \cap B(\hat x_0, 2r)$, by the local $W^{2,p}$ estimate, the Sobolev embedding theorem, and a standard iteration argument, we see that the local $L^\infty$ estimate is available for $G(\cdot,y)$, i.e.,
\begin{equation}				\label{eq0804tu}
\abs{G(x_0,y)} \lesssim \fint_{\Omega(\hat x_0, 2r)} \abs{G(x,y)}\,dx.
\end{equation}
Then by \eqref{eq0528af}, we have
\[
\abs{G(x_0,y)} \lesssim \fint_{\Omega(\hat x_0, 2r)} \abs{G(x,y)}\,dx \lesssim \norm{G(\cdot, y)}_{\ast} \lesssim 1,
\]
which clearly yields the bound \eqref{eq0529af}.
In the case when $r < \dist(x_0, \partial\Omega)$, consider a chain of domains $\Omega_j=\Omega(x_0, 2^j r)$ for $j=0,\ldots, N$ so that $2^N r \ge \diam \Omega$.
Notice that $N$ can be chosen so that
\begin{equation}			\label{eq0649tu}
N \lesssim \log \frac{\diam \Omega}{\abs{x_0-y}} +1.
\end{equation}
Since $G(\cdot, y)$ is in $\mathsf{BMO}(\Omega)$, we have
\[
\Abs{\fint_{\Omega_j} G(x,y)\,dx - \fint_{\Omega_{j+1}} G(x,y)\,dx} \lesssim \norm{G(\cdot, y)}_{\ast} \lesssim 1
\]
and by the choice of $N$, we have
\[
\fint_{\Omega_N} \abs{G(x,y)} \,dx=\fint_{\Omega} \abs{G(x,y)} \,dx  \lesssim \norm{G(\cdot, y)}_{\ast} \lesssim 1.
\]
Since $L(G(\cdot,y)-c)=LG(\cdot,y) =0$ in $\Omega_0=B(x_0,r) \subset \Omega$ for any $c \in \bR$, similar to \eqref{eq0804tu},  we have the local $L^\infty$ estimate
\begin{equation}				\label{eq0805tu}
\abs{G(x_0,y)-c} \lesssim \fint_{\Omega_0} \abs{G(x,y)-c}\,dx,\quad \forall c \in \bR.
\end{equation}
Then by taking $c=\fint_{\Omega_0} G(x,y)\,dx$ and using \eqref{eq0528af}, we have
\[
\Abs{G(x_0,y)-\fint_{\Omega_0} G(x,y)\,dx} \lesssim \norm{G(\cdot, y)}_{\ast} \lesssim 1.
\]
Therefore, by telescoping and using \eqref{eq0649tu}, we obtain the bound \eqref{eq0529af}.

Now, we turn to the gradient estimate \eqref{eq0530af}.
In the case when $r\ge \dist(x_0, \partial\Omega)$, similar to \eqref{eq0804tu}, we have the local gradient bound
\[
\abs{D_x G(x_0, y)} \lesssim \frac{1}{r} \fint_{\Omega(\hat x_0,2r)} \abs{G(x,y)}\,dx
\]
and in the case when $r < \dist(x_0, \partial\Omega)$, similar to \eqref{eq0805tu} we have
\[
\abs{D_x G(x_0, y)} \lesssim \frac{1}{r} \fint_{\Omega(x_0,r)} \abs{G(x,y)-c}\,dx,\quad \forall c\in \bR.
\]
In both cases, we get \eqref{eq0530af}.

Finally, we prove the second derivative estimate \eqref{eq0531af}.
In the case when $r< \dist(x_0, \partial\Omega)$, we use the interior $C^2$ estimate (see \cite[Theorem~1.6]{DK17}), $W^{2,p}$ estimate for elliptic equations with continuous coefficients, and a standard iteration argument to get
\[
\abs{D_x^2 G(x_0, y)} \lesssim \frac{1}{r^2} \fint_{\Omega(x_0,r)} \abs{G(x,y)-c}\,dx,\quad \forall c\in \bR.
\]
In the case when $r \ge \dist(x_0, \partial\Omega)$ and $\Omega$ is a $C^{2,\alpha}$ domain, we apply the $C^2$ estimate near the boundary in \cite{DEK18} (see Lemma 2.18 there), the boundary $W^{2,p}$ estimate for elliptic equations with continuous coefficients, and a standard iteration argument to get
\[
\abs{D_x^2 G(x_0, y)} \lesssim \frac{1}{r^2} \fint_{\Omega(\hat x_0,2r)} \abs{G(x,y)}\,dx.
\]
Therefore, we get \eqref{eq408} and \eqref{eq0531af}.

Finally, we prove that $G^\ast(x,y):=G(y,x)$ becomes the Green's function for the adjoint operator $L^\ast$.
For $x_0 \neq y \in \Omega$ and $\rho>0$, let $u=G^\ast_\rho(\cdot,x_0) \in L^{2}(\Omega)$ be a unique solution of the adjoint problem
\[
L^\ast u=\frac{1}{\abs{\Omega(x_0,\rho)}} \chi_{\Omega(x_0,\rho)}\;\text{ in }\;\Omega,\quad u=0\;\text{ on }\;\partial\Omega.
\]
Notice that we have
\[
\fint_{\Omega(y,\epsilon)} G^\ast_\rho(x,x_0)\,dx= \fint_{\Omega(x_0, \rho)} G_\epsilon(x,y)\,dx.
\]
By \cite[Theorem~1.8]{DEK18}, we know that $G^\ast_\rho(\cdot,x_0)$ is continuous in $\overline \Omega$.
Since $G_\epsilon(\cdot, y) \to G(\cdot, y)$ in weak-$\ast$ topology of $\mathsf{BMO}(\Omega)$, by taking the limit $\epsilon \to 0$, we have
\[
G^\ast_\rho(y,x_0)=  \fint_{\Omega(x_0, \rho)} G(x,y)\,dx.
\]
Therefore, we find
\begin{equation}				\label{eq1014sun}
\lim_{\rho \to 0}\, G^\ast_\rho(y, x_0)=G(x_0, y).
\end{equation}
We note that argument around \eqref{eq0804tu} -- \eqref{eq0805tu} shows that
\begin{equation}				\label{eq1015sun}
\abs{G^\ast_\rho(y, x_0)}
=\Abs{\fint_{\Omega(x_0, \rho)} G(x,y)\,dx}
\le C\left( 1+ \log \frac{\diam \Omega}{\abs{x_0-y}} \right),\quad \forall \rho>0.
\end{equation}
For $f\in L^{q}(\Omega)$ with $q\in (1,\infty)$, let $v \in W^{2,q}(\Omega)\cap W^{1,q}_0(\Omega)$ be the strong solution of
\[
Lv=f \;\text{ in }\;\Omega,\quad v=0\;\text{ on }\;\partial \Omega.
\]
Then, we have
\[
\fint_{\Omega(x_0,\rho)} v(x) \,dx = \int_\Omega  G^\ast_\rho(x,x_0) f(x)\,dx,
\]
and thus, by taking the limit, we also get
\[
v(x_0)=\lim_{\rho \to 0} \int_\Omega G^\ast_\rho(x,x_0)f(x)\,dx.
\]
Therefore, by \eqref{eq1014sun}, \eqref{eq1015sun}, and Lebesgue dominated convergence theorem, we obtain the representation formula \eqref{eq1748m}, which means $G^\ast(x,y)=G(y,x)$ is the Green's function for $L^\ast$.
\hfill\qedsymbol

\section{Proof of Theorem~\ref{thm2}}			
We first consider the case when the $L^2$ mean oscillation \eqref{13.24g} of $\mathbf A$ satisfies the Dini condition and the domain is $C^{1,1}$, which is assumed in Theorem~\ref{thm1}.
The other case will be treated at the end of the proof.

Suppose that $a$ is supported in $\Omega(y_0,R)$ so that $\norm{a}_{L_\infty} \lesssim 1/R^2$.
For $x_0 \in \Omega$ and $r>0$, we define
\[
u_{x_0,r}=\begin{cases}
			0 & \text{if } r >2\dist(x_0, \partial \Omega)\\
			\fint_{\Omega(x_0,r)} u & \text{if }r \le 2\dist(x_0, \partial\Omega)
  \end{cases}
\]
and set
\begin{equation}					\label{eq0748w}
\phi(x_0, r):= \fint_{\Omega(x_0,r)} \abs{u-u_{x_0,r}}.
\end{equation}
Note that $u_{x_0, r}$ in the above is slightly different from $\bar u_{x_0, r}$ defined in \eqref{eq1244th}.
Let us define the adjoint operator $L_0^\ast$ with constant coefficients
\[
L_0^\ast u:= \sum_{i,j=1}^2 D_{ij} (\bar a^{ij} u), \quad\text{where }\,\bar a^{ij}=\fint_{\Omega(x_0,r)} a^{ij},
\]
and split $u=v^{(r)}+w^{(r)}+\tilde w^{(r)}$, where $w=w^{(r)}$ is the weak solution of
\[
L_0^\ast w=a\text{ in }\;\Omega(x_0,r),\quad w=0\;\text{ on }\;\partial\Omega(x_0,r),
\]
and $\tilde w=\tilde w^{(r)}$ is the $L^2$ adjoint solution of
\[
L_0^\ast \tilde w=\sum_{i,j=1}^2 D_{ij} \left((\bar a^{ij}-a^{ij}) u \chi_{\Omega(x_0,r)} \right)\; \text{ in }\;\Omega,\quad \tilde w=0\;\text{ on }\;\partial\Omega.
\]
By the $L^2$ theory for adjoint equations, we have\footnote{As remarked in the introduction, here we can relax the assumption that $\Omega$ is a $C^{1,1}$ domain to an assumption that $\Omega$ is a $C^{1,\alpha}$ domain for some $\alpha>\frac12$ or that $\Omega$ is a bounded convex domain.}
\[
\left(\int_{\Omega(x_0,r)} \abs{\tilde w}^2 \right)^{1/2} \le \left(\int_{\Omega} \abs{\tilde w}^2 \right)^{1/2} \lesssim \left(\int_{\Omega(x_0,r)} \abs{\bar A-A}^2  \right)^{1/2} \norm{u}_{L^\infty(\Omega(x_0,r))}.
\]
By H\"older's inequality, we then have
\begin{equation}					\label{eq1019sat}
\fint_{\Omega(x_0,r)} \abs{\tilde w} \lesssim \left(\fint_{\Omega(x_0,r)} \abs{A-\bar A}^2 \right)^{1/2} \norm{u}_{L^\infty(\Omega(x_0,r))}\lesssim \omega_{A}(r) \,\norm{u}_{L^\infty(\Omega(x_0,r))}.
\end{equation}

Now we turn to the decay estimate of $\phi(x_0, r)$.
Let $\kappa \in (0,\frac14)$ be fixed.
Note that $v=v^{(r)}=u-w^{(r)}-\tilde w^{(r)}$ satisfies
\[
L_0^\ast v=0\; \text{ in }\;\Omega(x_0,r),\quad v=u=0 \text{ in }\; \partial\Omega \cap B(x_0,r).
\]
By an interior and boundary estimate for elliptic equations with constant coefficients, we have (recall $v_{x_0,r}=0$ if $B(x_0,r/2)$ intersects $\partial\Omega$)
\begin{equation}				\label{eq13.46f}
\fint_{\Omega(x_0, \kappa r)} \abs{v - v_{x_0, \kappa r}}  \le 2\kappa r \norm{Dv}_{L^\infty(\Omega(x_0,r/3))} \le C_0 \kappa \fint_{\Omega(x_0,r)} \abs{v-v_{x_0,r}}.
\end{equation}
Here, $C_0$ is a constant depending only on $\nu$ and $\Omega$.
By using the decomposition $u=v+w+\tilde w$ and \eqref{eq13.46f}, we obtain
\begin{align*}
\fint_{\Omega(x_0, \kappa r)} \abs{u - u_{x_0, \kappa r}}
& \le \fint_{\Omega(x_0, \kappa r)} \abs{v - v_{x_0, \kappa r}} + 2\fint_{\Omega(x_0, \kappa r)} \abs{w} +2\fint_{\Omega(x_0, \kappa r)} \abs{\tilde w} \\
& \le  C_0\kappa \fint_{\Omega(x_0,r)} \abs{u - u_{x_0,r}} +C (\kappa^{-2}+1) \fint_{\Omega(x_0,r)} \left(\abs{w} + \abs{\tilde w} \right).
\end{align*}
Here, we used the obvious facts that
\[
u_{x_0, r}=v_{x_0,r} +  w_{x_0,r} + \tilde w_{x_0,r},\quad
\abs{w_{x_0,r}} \le \fint_{\Omega(x_0,r)} \abs{w},\quad \abs{\tilde w_{x_0,r}} \le \fint_{\Omega(x_0,r)} \abs{\tilde w}.
\]
Therefore, by \eqref{eq1019sat}, we have
\[
\phi(x_0,\kappa r) \le C_0 \kappa  \phi(x_0, r) +  C (\kappa^{-2}+1) \omega_{\mathbf A}(r) \,\norm{u}_{L^\infty(\Omega(x_0,r))}
+ C (\kappa^{-2}+1) \fint_{\Omega(x_0,r)}\abs{w}.
\]
Now we fix a $\kappa=\kappa(\nu, \Omega) \in (0,\frac14)$ sufficiently small so that $C_0\kappa \le 1/2$.
Then, we obtain
\begin{equation}				\label{eq16.57m}
\phi(x_0, \kappa r) \le \frac12 \phi(x_0, r)+ C \omega_{\mathbf A}(r) \,\norm{u}_{L^\infty(\Omega(x_0,r))}+  C \fint_{\Omega(x_0,r)}\abs{w^{(r)}}.
\end{equation}
By iterating, for $j=1,2,\ldots$, we get
\begin{equation}				\label{eq1525mon}
\phi(x_0, \kappa^j r) \le 2^{-j} \phi(x_0, r) +C \norm{u}_{L^\infty(\Omega(x_0,r))} \sum_{i=1}^{j} 2^{-i} \omega_{\mathbf A}(\kappa^{j-i} r) + C\psi_j(x_0,r),
\end{equation}
where we set
\begin{equation}				\label{eq10.55}
\psi_j(x_0,t):=\sum_{i=1}^{j} 2^{-i}\fint_{\Omega(x_0,\kappa^{j-i}r)}\abs{w^{(\kappa^{j-i}r)}}.
\end{equation}
Note that
\begin{align}
							\nonumber
\sum_{j=1}^\infty \sum_{i=1}^j 2^{-i} \omega_{\mathbf A}(\kappa^{j-i} r)& =\sum_{i=1}^\infty \sum_{j=i}^\infty 2^{-i} \omega_{\mathbf A}(\kappa^{j-i} r)
=\sum_{i=1}^\infty  2^{-i}\sum_{j=0}^\infty \omega_{\mathbf A}(\kappa^{j} r)\\
							\label{eq0752w}
&=\sum_{j=0}^\infty \omega_{\mathbf A}(\kappa^{j} r) \lesssim \int_0^r \frac{\omega_{\mathbf A}(t)}{t}\,dt.
\end{align}

\begin{lemma}					\label{lem-key}
We have
\begin{equation}				\label{eq1855w}
\sum_{j=1}^\infty \psi_j(x_0,r) \lesssim 1,\quad \forall x_0 \in \Omega, \quad 0<\forall r <\diam \Omega.
\end{equation}
\end{lemma}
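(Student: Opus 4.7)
The plan is to reorganize the double sum by Fubini and then bound each scale by duality. Setting $k = j - i$,
\begin{equation*}
\sum_{j=1}^\infty \psi_j(x_0, r) = \sum_{i=1}^\infty 2^{-i} \sum_{k=0}^\infty \fint_{\Omega(x_0, \kappa^k r)} \abs{w^{(\kappa^k r)}} \lesssim \sum_{k=0}^\infty A_k,
\end{equation*}
where $A_k := \fint_{\Omega(x_0, \rho_k)} \abs{w^{(\rho_k)}}$ with $\rho_k := \kappa^k r$, reducing the task to proving $\sum_k A_k \lesssim 1$.

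To estimate $A_\rho$ at a single scale $\rho$, I would use duality. Since $L_0^* = L_0$ for constant coefficients, every test function $\phi \in L^\infty(\Omega(x_0, \rho))$ with $\norm{\phi}_{L^\infty}\le 1$ satisfies $\int w^{(\rho)}\phi = \int a\psi$, where $\psi$ solves $L_0\psi = \phi$ in $\Omega(x_0,\rho)$ with $\psi = 0$ on $\partial\Omega(x_0,\rho)$. Taking the supremum over $\phi$ yields $\int\abs{w^{(\rho)}} = \sup_\phi\abs{\int a\psi}$. Reducing $L_0$ to the Laplacian by an affine change of variables and invoking $W^{2,p}$ estimates together with Morrey's embedding (or a Krylov--Safonov boundary H\"older estimate, valid on $C^{1,1}$ domains), $\psi$ satisfies the uniform scale-invariant bounds
\[
\norm{\psi}_{L^\infty(\Omega(x_0,\rho))} \lesssim \rho^2, \qquad [\psi]_{C^\alpha(\Omega(x_0,\rho))} \lesssim \rho^{2-\alpha}
\]
for some fixed $\alpha \in (0,1)$, and vanishes on $\partial\Omega(x_0,\rho)$.

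I would then split into two regimes. In the small-scale regime $\rho_k \le R$, the direct bound $\norm{a\chi_{\Omega(x_0,\rho_k)}}_{L^1} \le \norm{a}_{L^\infty}\rho_k^2 \lesssim \rho_k^2/R^2$ combined with the $L^\infty$ bound on $\psi$ gives $A_k \lesssim (\rho_k/R)^2$. In the large-scale regime $\rho_k > R$, I would locate a base point $p^* \in \overline{\Omega(x_0,\rho_k)}$ within distance $2R$ of $\supp(a\chi_{\Omega(x_0,\rho_k)})$ so that $\psi - \psi(p^*)$ is small on that support. The three geometric sub-cases are: (i) $\supp(a) \subset \Omega(x_0, \rho_k)$ with $\dist(y_0,\partial\Omega) > R$, where $p^* = y_0$ and the constant $\psi(y_0)$ is absorbed using $\int a = 0$; (ii) $\supp(a) \subset \Omega(x_0, \rho_k)$ with $\dist(y_0,\partial\Omega) \le R$, where $p^* \in \partial\Omega\cap\overline{B(y_0,R)}$ forces $\psi(p^*) = 0$; (iii) partial overlap, where a short geometric check shows $\rho_k - \abs{y - x_0} \le 2R$ for every $y$ in the overlap, so $p^*$ can be taken on $\partial B(x_0,\rho_k)$. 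In each sub-case the H\"older bound yields $\abs{\int a\psi} = \abs{\int a(\psi - \psi(p^*))} \lesssim \norm{a}_{L^1}\rho_k^{2-\alpha}R^\alpha$, hence $A_k \lesssim (R/\rho_k)^\alpha$.

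Summing, both $\sum_{\rho_k \le R}(\rho_k/R)^2$ and $\sum_{\rho_k > R}(R/\rho_k)^\alpha$ are geometric series with ratios $\kappa^2 < 1$ and $\kappa^\alpha < 1$ respectively, each bounded by an absolute constant. The main obstacle will be the case analysis in the large-scale regime, especially verifying the partial-overlap sub-case (iii) and obtaining the boundary $C^\alpha$ estimate for $\psi$ uniformly in $\rho$ on the mixed (spherical plus $\partial\Omega$) boundary of $\Omega(x_0, \rho_k)$; this is where the $C^{1,1}$ assumption on $\Omega$ enters critically.
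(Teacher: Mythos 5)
Your proposal is correct in substance and would prove the lemma, but it implements the key estimates differently from the paper, so let me compare. The paper keeps the quantity $\psi_j$ from \eqref{eq10.55} and estimates it case by case (introducing the transition index $\ell$ and checking the regimes $\ell<0$, $0\le\ell<j$, $\ell\ge j$, etc.), whereas your reindexing $\sum_{j}\psi_j=\bigl(\sum_i 2^{-i}\bigr)\sum_k \fint_{\Omega(x_0,\kappa^k r)}\abs{w^{(\kappa^k r)}}$ is legitimate (all terms are nonnegative) and genuinely streamlines the bookkeeping: it reduces the lemma to the single-scale bounds $A_k\lesssim\min\bigl((\rho_k/R)^2,(R/\rho_k)^\alpha\bigr)$, which sum geometrically. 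For the single-scale estimate, the paper represents $w^{(r)}$ through the Green's function $G_r$ of the constant-coefficient operator in $\Omega(x_0,r)$ and uses the pointwise kernel bounds \eqref{green1}--\eqref{green2}, obtaining the decay $R/r$ in the large-scale regime; you instead dualize against $L_0\psi=\phi$, $\psi=0$ on $\partial\Omega(x_0,\rho)$, and use the scale-invariant bounds $\norm{\psi}_{L^\infty}\lesssim\rho^2$ (maximum principle) and $[\psi]_{C^\alpha}\lesssim\rho^{2-\alpha}$ (boundary H\"older via barriers or exterior measure density, which the intersection $B(x_0,\rho)\cap\Omega$ satisfies uniformly). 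This buys robustness -- your $C^\alpha$ bound is a weaker requirement on the auxiliary problem than the up-to-the-boundary Lipschitz-type difference estimate \eqref{green2} -- at the price of the slower decay $(R/\rho_k)^\alpha$, which is still summable; the trichotomy you use (atom cancellation when $\dist(y_0,\partial\Omega)>R$ and the support is contained, vanishing of $\psi$ on the boundary otherwise, and a direct size bound at small scales) is exactly the mechanism behind the paper's estimates \eqref{eq1234th}, \eqref{eq1235th} and \eqref{eq0751tue}.

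One detail in your large-scale case analysis needs the same care the paper takes with its point $y'$: the base point $p^*$ must lie in $\overline{\Omega(x_0,\rho_k)}$, and in your sub-case (ii) the nearest point of $\partial\Omega$ to $y_0$, and in sub-case (iii) the radial projection onto $\partial B(x_0,\rho_k)$, may fall outside that closure. The fix is immediate and requires no new idea: in (ii) choose $p^*\in\partial\Omega(x_0,\rho_k)$ with $\abs{p^*-y_0}\le R$ (a segment from $y_0$ to a nearby point of $\Omega^c$ must exit $\Omega(x_0,\rho_k)$ through its boundary), and in (iii) simply use that for $y$ in the overlap $\dist\bigl(y,\partial\Omega(x_0,\rho_k)\bigr)\le\rho_k-\abs{y-x_0}\le 2R$ together with $\psi=0$ on all of $\partial\Omega(x_0,\rho_k)$, so no single $p^*$ is needed; likewise in (i) it is cleaner to subtract $\psi(p^*)$ for some $p^*\in\supp a$, since the cancellation $\int a=0$ makes the choice of constant irrelevant. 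With these adjustments your argument goes through and is uniform in $x_0$ and $r$, as required.
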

Take the lemma for granted now.
Then from \eqref{eq1525mon} and \eqref{eq0752w} we find that
\begin{equation}				\label{eq0750w}
\sum_{j=0}^\infty \phi(x_0, \kappa^j r) \lesssim \phi(x_0, r) +  \norm{u}_{L^\infty(\Omega(x_0,r))} \int_0^r \frac{\omega_{\mathbf A}(t)}{t}\,dt +1.
\end{equation}
Since we have
\[
\abs{u_{x_0,\kappa r} - u_{x_0, r}} \le
\abs{u(x)-u_{x_0,r}} + \abs{u(x) - u_{x_0,\kappa r}},
\]
by taking average over $x \in \Omega(x_0, \kappa r)$, we obtain
\[
\abs{u_{x_0,\kappa r} - u_{x_0,r}} \le \phi(x_0,\kappa r) + \phi(x_0, r).
\]
Then, by iterating, we get
\begin{equation}				\label{eq0756w}
\abs{u_{x_0,\kappa^i r} - u_{x_0, r}} \le 2 \sum_{j=0}^i \phi(x_0, \kappa^j r).
\end{equation}
By Theorem~1.8 in \cite{DEK18}, we find that $u$ is continuous in $\overline\Omega$, and thus we see that
\begin{equation}				\label{eq1402t}
\lim_{i\to \infty} u_{x_0,\kappa^i r}=u(x_0).
\end{equation}
Therefore, by taking $i\to \infty$ in \eqref{eq0756w} and using \eqref{eq0750w}, we get
\begin{equation}				\label{eq13.13}
\abs{u(x_0)-u_{x_0,r}} \lesssim \phi(x_0, r)+
\norm{u}_{L^\infty(\Omega(x_0,r))} \int_0^r \frac{\omega_{\mathbf A}(t)}t \,dt+1,
\end{equation}
which implies that
\begin{equation}			\label{eq14.39t}
\abs{u(x_0)}  \lesssim r^{-2} \norm{u}_{L^1(\Omega(x_0,r))} + \norm{u}_{L^\infty(\Omega(x_0,r))} \int_0^r \frac{\omega_{\mathbf A}(t)}t \,dt+1.
\end{equation}
Now, taking the supremum for $x_0\in \Omega(x,r)$, where $x \in \overline\Omega$, we have
\[
\norm{u}_{L^\infty(\Omega(x,r))} \le
C \left( r^{-2} \norm{u}_{L^1(\Omega(x, 2r))} + \norm{u}_{L^\infty(\Omega(x, 2r))} \int_0^r \frac{\omega_{\mathbf A}(t)}t \,dt +1\right).
\]
We fix $r_0<\frac13$ such that for any $0<r\le r_0$,
\[
C \int_0^{r} \frac{\omega_{\mathbf A}(t)}t \,dt \le \frac1{3^2}.
\]
Then, we have for any $x \in \Omega$ and $0<r\le r_0$ that
\[
\norm{u}_{L^\infty(\Omega(x,r))} \le
3^{-2}\norm{u}_{L^\infty(\Omega(x, 2r))} + C r^{-2} \norm{u}_{L^1(\Omega(x, 2r))} + C.
\]
For $k=1,2,\ldots$, denote $r_k=3-2^{1-k}$.
Note that $r_{k+1}-r_k=2^{-k}$ for $k\ge 1$ and $r_1=2$.
Without loss of generality, let us assume that $0 \in \Omega$.
For $x\in \Omega_{r_k}=\Omega(0, r_k)$ and $r\le 2^{-k-2}$, we have $B(x, 2r) \subset B_{r_{k+1}}$. We take $k_0\ge 1$ sufficiently large such that $2^{-k_0-2}\le r_0$.
It then follows that for any $k\ge k_0$,
\[
\norm{u}_{L^\infty(\Omega_{r_k})} \le 3^{-2} \norm{u}_{L^\infty(\Omega_{r_{k+1}})}+C 2^{2k} \norm{u}_{L^1(\Omega_3)} + C.
\]
By multiplying the above by $3^{-2k}$ and then summing over $k=k_0, k_0+1,\ldots$, we reach
\[
\sum_{k=k_0}^\infty 3^{-2k}\norm{u}_{L^\infty(\Omega_{r_k})} \le \sum_{k=k_0}^\infty 3^{-2(k+1)} \norm{u}_{L^\infty(\Omega_{r_{k+1}})}+C \norm{u}_{L^1(\Omega_3)} + C.
\]
Since we assume that $u\in L^\infty(\Omega)$, the summations on both sides are convergent.
Therefore, we have
\[
3^{-2k_0}\norm{u}_{L^\infty(\Omega_{r_{k_0}})} \le C  \norm{u}_{L^1(\Omega_3)} + C,
\]
which in turn implies (recall $2 < r_{k_0}$)
\begin{equation}					\label{eq10.23m}
\norm{u}_{L^\infty(\Omega_2)} \lesssim  \norm{u}_{L^1(\Omega_3)} + 1.
\end{equation}

\begin{lemma}						\label{lem-global}
We have
\[
\norm{u}_{L^1(\Omega)} \lesssim  1.
\]
\end{lemma}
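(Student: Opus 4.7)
The plan is to argue by $L^1$--$L^\infty$ duality. Since
\[
\norm{u}_{L^1(\Omega)} = \sup\Set{\int_\Omega u f\,dx : f\in L^\infty(\Omega),\; \norm{f}_{L^\infty(\Omega)}\le 1},
\]
it suffices to produce a uniform bound on $\abs{\int_\Omega u f}$ for every such $f$. For each fixed $f$, I would invoke the $W^{2,p}$-solvability of the forward problem \eqref{eq10.36} (a standing hypothesis in our setting; $p=2$ is sufficient) to obtain a strong solution $v\in W^{2,2}(\Omega)\cap W^{1,2}_0(\Omega)$ of $Lv=f$ in $\Omega$ with $v=0$ on $\partial\Omega$. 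The Alexandrov--Bakelman--Pucci maximum principle in $\bR^2$ then delivers
\[
\norm{v}_{L^\infty(\Omega)}\le C\,\diam(\Omega)\,\norm{f}_{L^2(\Omega)}\le C\,\diam(\Omega)\,\abs{\Omega}^{1/2}\lesssim 1,
\]
with the constant depending only on $\nu$ and $\Omega$.

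To close the argument, I would use the defining pairing of the $L^2$-adjoint solution $u$ from \cite[Lemma~2]{EM2016}, namely $\int_\Omega u\, Lv = \int_\Omega (L^*u)\,v = \int_\Omega a v$, which is legitimate because $v\in W^{2,2}$ and $u\in L^2$. Hence $\int_\Omega u f = \int_\Omega a v$. Combined with the atom size bound $\norm{a}_{L^\infty}\le 1/\abs{\Omega(y_0,R)}$, which gives $\norm{a}_{L^1(\Omega)}\le 1$, this yields
\[
\Abs{\int_\Omega u f}=\Abs{\int_\Omega a v}\le \norm{a}_{L^1(\Omega)}\,\norm{v}_{L^\infty(\Omega)}\lesssim 1,
\]
and taking the supremum over admissible $f$ completes the proof. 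It is worth noting that the atom's cancellation condition is never invoked here---only its $L^\infty$ normalization---because the pointwise control on $v$ from the maximum principle obviates the need to pair $a$ against the oscillation of $v$. I do not expect any substantive obstacle: the argument reduces to the Alexandrov--Bakelman--Pucci estimate combined with the adjoint pairing identity, both of which are standard tools available in the present framework.
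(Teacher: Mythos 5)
Your argument is correct and is essentially the paper's own proof: both test $u$ against an arbitrary $f\in L^\infty(\Omega)$, solve $Lv=f$ with $v\in W^{2,2}(\Omega)\cap W^{1,2}_0(\Omega)$, bound $\norm{v}_{L^\infty(\Omega)}\lesssim \norm{f}_{L^\infty(\Omega)}$, and conclude via the adjoint pairing $\int_\Omega uf=\int_\Omega av$ together with $\norm{a}_{L^1(\Omega)}\le 1$. The only cosmetic difference is that you obtain the pointwise bound on $v$ from the Alexandrov--Bakelman--Pucci estimate rather than from the $W^{2,2}$ estimate plus Sobolev embedding, a variant the paper itself notes in a footnote as available in the scalar case.
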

\begin{proof}
For any $f \in L^\infty(\Omega)$, let $v \in W^{2,2}(\Omega)\cap W^{1,2}_0(\Omega)$ be the unique solution of the problem
\[
Lv=f\;\text{ in }\;\Omega,\quad v=0\;\text{ on }\;\partial\Omega.
\]
See, for instance, \S 11.3 in {\cite{K08}}.
By the Sobolev embedding and the $W^{2,2}$ theory, we have $v \in C(\overline \Omega)$ and
\[
\norm{v}_{L^\infty(\Omega)} \le C\norm{v}_{W^{2,2}(\Omega)}
\le C \norm{f}_{L^2(\Omega)} \le C \norm{f}_{L^\infty(\Omega)},
\]
where $C=C(\nu, \omega_{\mathbf A}, \Omega)$.\footnote{Here, we only need that  $\Omega$ is a bounded $C^{1,1}$ domain. In the scalar case, by using Alexandrov estimate, one can bypass the $W^{2,2}$ estimate and directly get $\norm{v}_{L^\infty(\Omega)} \le C \norm{f}_{L^2(\Omega)} \le C \norm{f}_{L^\infty(\Omega)}$, and thus only boundedness of $\Omega$ is needed.}
Since
\[
\int_\Omega u f \,dx= \int_\Omega a v\,dx,
\]
and $\norm{a}_{L^1(\Omega)} \le  1$, we find
\[
\Abs{\int_\Omega u f\,dx } \le C \norm{f}_{L^\infty(\Omega)},
\]
which implies that $\norm{u}_{L^1(\Omega)} \le C$.
\end{proof}
By Lemma~\ref{lem-global} and \eqref{eq10.23m}, we have (recalling that $0\in \Omega$ is an arbitrary choice)
\[
\norm{u}_{L^\infty(\Omega)} \lesssim 1
\]
as desired.
The proof of the theorem is complete once we prove Lemma~\ref{lem-key}.

\begin{proof}[Proof of Lemma~\ref{lem-key}]
Let $G_r(x,y)$ denote the Green's function for the constant coefficient operator $L_0^\ast =  \sum D_{ij} \bar a^{ij} =  \sum \bar a^{ij} D_{ij}$ in $\Omega(x_0,r)$.
Then we have
\begin{equation*}
w^{(r)}(x)=\int_{\Omega(x_0,r)} G_r(x,y) a(y)\,dy,\quad \forall x \in \Omega(x_0,r).
\end{equation*}
We shall use the following  estimates for Green's function $G_r(x,y)$:
\begin{align}
							\label{green1}
\abs{G_r(x,y)} &\le C\left(1+\log \frac{2r}{\abs{x-y}} \right),\\
							\label{green2}
\abs{G_r(x,y)-G_r(x,y')} &\le C\,\frac{\abs{y-y'}}{\abs{x-y}}.
\end{align}
Since $L_0$ is a constant coefficients operator, the above estimates are widely known.
We remark that \eqref{green2} may not be sharp if $\abs{y-y'} > \frac12 \abs{x-y}$ but it is still a legitimate estimate, which can be seen by telescoping: choose a sequence of points $y_0, y_1, \ldots, y_N$ in $\Omega$ such that\footnote{This is always available when $\Omega$ is, for example, a bounded Lipschitz domain.}
\[
y_0=y,\;\; y_N=y',\;\; \abs{y_i-y_{i-1}} \le \frac12 \abs{x-y},\;\; \abs{x-y} \le \abs{x-y_i},\;\text{ and }\;\sum_{i=1}^N\, \abs{y_i-y_{i-1}} \simeq \abs{y-y'}.
\]
Then, we have
\[
\abs{G(x,y)-G(x,y')} \le \sum_{i=1}^N\, \abs{G(x,y_{i-1})-G(x,y_i)}\lesssim \sum_{i=1}^N \,\frac{\abs{y_{i-1}-y_i}}{\abs{x-y_i}} \lesssim \frac{\abs{y-y'}}{\abs{x-y}},
\]
which establishes \eqref{green2}.

To estimate $w^{(r)}$, we consider two cases: $\abs{x_0-y_0} \le 2R$ and $\abs{x_0-y_0} >2R$.

{\bf Case 1.} Let us first consider the case when $\abs{x_0-y_0} \le 2R$.
If $r \le 3R$, we use the size condition of $a$ and \eqref{green1} to estimate $w^{(r)}(x)$ for $x \in \Omega(x_0,r)$ as follows:
\begin{align}
						\nonumber			
\abs{w^{(r)}(x)} &\le \int_{\Omega(x_0,r)} \abs{G_r(x,y)}\, \abs{a(y)} \,dy \lesssim \frac{1}{R^2} \int_{\Omega(x_0,r) \cap \Omega(y_0,R)} \left(1+\log\frac{2r}{\abs{x-y}}\right)\,dy\\
						\label{eq1229mon}					
&\lesssim \frac{1}{R^2} \int_{B(x,2r)}\left( 1+ \log\frac{2r}{\abs{x-y}}\right)\,dy \lesssim  \frac{r^2}{R^2}.
\end{align}
Therefore, we have
\begin{equation}
					\label{eq1234th}
\fint_{\Omega(x_0,r)} \abs{w^{(r)}} \lesssim \frac{r^2}{R^2}\quad\text{when }\; r\le 3R.
\end{equation}
If $r > 3R$, we have $\Omega(x_0,r) \supset \Omega(y_0,R)$.
In the case when $R <\dist(y_0, \partial\Omega)$, we use the cancellation property of $a$ to find that
\[
w^{(r)}(x) =\int_{\Omega(x_0,r)} G_r(x,y) a(y)\,dy=\int_{\Omega(y_0,R)} \left(G_r(x,y)-G_r(x, y_0)\right) a(y)\,dy.
\]
Then by the estimate \eqref{green2} and using the symmetry, we have
\begin{align*}
\abs{w^{(r)}(x)} &\le \int_{\Omega(y_0,R)} \abs{G_r(x,y) -G_r(x,y_0)}\, \abs{a(y)} \,dy \\
& \lesssim \frac{1}{R} \int_{B(y_0, R)} \min \left( \frac{1}{\abs{x-y}}, \frac{1}{\abs{x-y_0}} \right)\,dy.
\end{align*}
In the case when $\abs{x-y_0} \le R$, we estimate
\[
\abs{w^{(r)}(x)} \lesssim \frac{1}{R} \int_{B(y_0, R)}\frac{1}{\abs{x-y}}\,dy \lesssim \frac{1}{R} \int_{B(x,2R)}\frac{1}{\abs{x-y}}\,dy \lesssim 1.
\]
In the case when $\abs{x-y_0} > R$, we estimate
\[
\abs{w^{(r)}(x)} \lesssim \frac{1}{R} \int_{B(y_0, R)}\frac{1}{\abs{x-y_0}}\,dy \lesssim \frac{R}{\abs{x-y_0}}.
\]
Note that for $x\in \Omega(x_0,r)$, we have
\[
\abs{x-y_0} \le \abs{x-x_0}+ \abs{x_0-y_0} \le r+2R < 2r.
\]
Combining these together, we have
\begin{align*}
\fint_{\Omega(x_0,r)} \abs{w^{(r)}(x)}\,dx  &\lesssim \frac{1}{r^2} \left( \int_{B(y_0, R)} \abs{w^{(r)}(x)}\,dx+ \int_{B(y_0, 2r)\setminus B(y_0, R)} \abs{w^{(r)}(x)}\,dx \right) \\
&\lesssim \frac{1}{r^2} \left(\int_{B(y_0, R)} dx+  \int_{B(y_0, 2r)} \frac{R}{\abs{x-y_0}}\,dx\right) \lesssim \frac{R^2}{r^2}+ \frac{R}{r} \lesssim \frac{R}{r}.
\end{align*}
In the case when $R \ge \dist(y_0, \partial\Omega)$, we can find $y' \in \partial \Omega(x_0,r)$ such that $\abs{y'-y_0} \le R$.
Therefore, we have
\[
w^{(r)}(x) =\int_{\Omega(x_0,r)} G_r(x,y) a(y)\,dy=\int_{\Omega(y_0,R)} \left(G_r(x,y)-G_r(x, y')\right) a(y)\,dy.
\]
Notice that $\Omega(y_0, R) \subset \Omega(y', 2R)$.
Then, by repeating the above argument with $y'$ in place of $y_0$, we get the same conclusion.
Therefore, we have
\begin{equation}			\label{eq1235th}
\fint_{\Omega(x_0,r)} \abs{w^{(r)}(x)}\,dx  \lesssim  \frac{R}{r} \quad\text{when }\; r> 3R.
\end{equation}

Now, let us look into $\psi_j(x_0,r)$, which is defined in \eqref{eq10.55}.
Let $\ell$ be the largest integer satisfying $\kappa^\ell r>3R$.
In the case when $\ell <0$, we have $r\le 3R$, and thus by \eqref{eq1234th},
\[
\psi_j(x_0, r)= \sum_{i=1}^{j} 2^{-i} \left(\frac{\kappa^{j-i}r}{R}\right)^2  \lesssim 2^{-j} \frac{r^2}{R^2} \lesssim 2^{-j}.
\]
In the case when $0\le \ell < j$, we have by \eqref{eq1234th} and \eqref{eq1235th} that
\begin{align*}
\psi_j(x_0,r)&= \sum_{i=1}^{j-\ell-1} 2^{-i} \left(\frac{\kappa^{j-i}r}{R}\right)^2
+\sum_{i=j-\ell}^{j} 2^{-i} \frac{R}{\kappa^{j-i}r} \\
&\lesssim \left(\frac{\kappa^\ell r}{R}\right)^2 2^{\ell-j} + \left(\frac{R}{\kappa^\ell r}\right) 2^{\ell-j} \lesssim 2^{\ell-j},
\end{align*}
where we used $\kappa^\ell r \simeq R$, which follows from the choice of $\ell$.
Finally, in the case when $\ell \ge j$, we have by \eqref{eq1235th} that
\[
\psi_j(x_0,r)=R \sum_{i=1}^{j} 2^{-i} \frac{R}{\kappa^{j-i}r}  \lesssim \frac{R}{\kappa^j r} = \frac{R}{\kappa^{\ell} r} \kappa^{\ell-j} \lesssim \kappa^{\ell-j}.
\]
Therefore, we have
\begin{align}
						\nonumber
\sum_{j=0}^\infty \psi_j(x_0,r) &\lesssim \sum_{j=0}^\infty \left( 2^{-j}[\ell<0]+ 2^{\ell-j}[0\le \ell<j]+  \kappa^{\ell-j}[\ell \ge j] \right) \\
						\label{eq1939sun}
& \lesssim \sum_{j=0}^\infty 2^{-j} + \sum_{j=\ell}^\infty 2^{\ell-j}+\sum_{j=0}^{\ell} \kappa^{\ell-j}\lesssim 1.
\end{align}
This completes the proof of \eqref{eq1855w} in the case when $\abs{x_0-y_0} \le 2R$.

{\bf Case 2.} Next, we turn to the proof of \eqref{eq1855w} in the case when $\abs{x_0-y_0}>2R$.
We first consider the case when $\Omega(y_0,R) \not\subset \Omega(x_0,r)$.
Instead of \eqref{eq1229mon}, we estimate
\begin{equation}				\label{eq1109th}
\abs{w^{(r)}(x)} \le \int_{\Omega(x_0,r)} \abs{G_r(x,y)}\, \abs{a(y)} \,dy \lesssim \frac{1}{R^2} \int_{\Omega(x_0,r) \cap \Omega(y_0,R)} \abs{G_r(x,y)}\,dy.
\end{equation}
If $\Omega(x_0,r) \cap \Omega(y_0,R) =\emptyset$, then the above integral is zero.
Therefore, we have
\begin{equation}					\label{eq1048w}
\fint_{\Omega(x_0,r)} \abs{w^{(r)}(x)}\,dx =0\quad\text{when }\;\Omega(x_0,r) \cap \Omega(y_0,R) =\emptyset.
\end{equation}
If $\Omega(x_0,r) \cap \Omega(y_0,R) \neq \emptyset$, then for $y\in \Omega(x_0,r) \cap \Omega(y_0,R)$, we can find $y' \in \partial \Omega(x_0,r)$ such that $\abs{y-y'} \le 2R$   (recall  $\Omega(y_0,R) \not\subset \Omega(x_0,r)$).
Then, by the Green's function estimate \eqref{green2}, we have
\[
\abs{G_r(x,y)}=\abs{G_r(x,y)-G_r(x,y')} \lesssim \frac{R}{\abs{x-y}}.
\]
Hence by \eqref{eq1109th}, we obtain
\[
\abs{w^{(r)}(x)} \lesssim \frac{1}{R} \int_{\Omega(x_0,r) \cap \Omega(y_0,R)} \frac{1}{\abs{x-y}}\,dy.
\]
In the case when $\abs{x-y_0} >2R$, we have $\abs{x-y}>R$, and thus
\[
\abs{w^{(r)}(x)}  \lesssim \frac{1}{R^2} \int_{B(y_0, R)} \,dy \lesssim 1.
\]
In the case when $\abs{x-y_0} \le 2R$, we have $B(y_0, R) \subset B(x, 3R)$, and thus
\[
\abs{w^{(r)}(x)}  \lesssim \frac{1}{R} \int_{B(x, 3R)} \frac{1}{\abs{x-y}}\,dy \lesssim 1.
\]
In both cases, we have
\begin{multline}				\label{eq0751tue}
\fint_{\Omega(x_0,r)} \abs{w^{(r)}(x)}\,dx \lesssim 1\\
\text{when }\;\Omega(y_0,R) \not\subset \Omega(x_0,r)\;\text{ and }\;\Omega(x_0,r) \cap \Omega(y_0,R) \neq\emptyset.
\end{multline}

In the case when $\Omega(y_0,R) \subset \Omega(x_0,r)$, we have $r>3R$ (recall $\abs{x_0-y_0}>2R$) and similar to \eqref{eq1235th}, we have
\begin{equation}				\label{eq1108tue}
\fint_{\Omega(x_0,r)} \abs{w^{(r)}(x)}\,dx \lesssim \frac{R}{r}.
\end{equation}
Since we assume $\abs{x_0-y_0}>2R$, there exists the smallest integer $\ell$ satisfying
\[
\Omega(x_0, \kappa^\ell r) \cap \Omega(y_0,R) =\emptyset.
\]
Then, by the choice of $\ell$,
\[
\Omega(x_0, \kappa^{\ell-1}r) \cap \Omega(y_0,R) \neq \emptyset,
\]
and thus $\kappa^{\ell-1} r >\abs{x_0-y_0}-R>R$.
Also, since  $\kappa < \frac14$, we note that
\[\Omega(y_0,R) \subset \Omega(x_0, 2R+\kappa^{\ell-1}r) \subset \Omega(x_0, \kappa^{\ell-2}r).
\]
In the case when $\ell \le 1$, then by \eqref{eq1048w} and \eqref{eq0751tue}, we have
\[
\psi_j(x_0,r)=2^{-j} \fint_{\Omega(x_0,r)} \abs{w^{(r)}} \lesssim 2^{-j}.
\]
In the case when $1<\ell \le j$, then by \eqref{eq1048w}, \eqref{eq0751tue}, and \eqref{eq1108tue}, we have
\begin{align*}
\psi_j(x_0,r)&= 2^{-(j-\ell+1)} \fint_{\Omega(x_0, \kappa^{\ell-1}r)} \abs{w^{(\kappa^{\ell-1}r)}}+ \sum_{i=j-\ell+2}^{j} 2^{-i} \fint_{\Omega(x_0, \kappa^{j-i}r)} \abs{w^{(\kappa^{j-i}r)}}\\
& \lesssim 2^{\ell-j} + \sum_{i=j-\ell+1}^{j} 2^{-i} \frac{R}{\kappa^{j-i}r} \lesssim 2^{\ell-j}+
\frac{R}{\kappa^{\ell-1} r} 2^{\ell-j} \lesssim 2^{\ell-j}.
\end{align*}
In the case when $\ell > j$, then by \eqref{eq0751tue} we have
\[
\psi_j(x_0,r) = \sum_{i=1}^{j} 2^{-i} \fint_{\Omega(x_0, \kappa^{j-i}r)} \abs{w^{(\kappa^{j-i}r)}} \lesssim \sum_{i=1}^{j} 2^{-i}  \frac{R}{\kappa^{j-i}r}  \lesssim \frac{R}{\kappa^j r} =\frac{R}{\kappa^{\ell-1}r} \kappa^{\ell-1-j} \lesssim \kappa^{\ell-j}.
\]
Therefore, similar to \eqref{eq1939sun}, we obtain
\[
\sum_{j=0}^\infty \psi_j(x_0,r) \lesssim \sum_{j=0}^\infty\left( 2^{-j}[\ell \le 1] + 2^{\ell-j} [1<\ell \le j]+ \kappa^{\ell-j}[\ell>j] \right) \lesssim 1
\]
as desired.
\end{proof}

Now, we consider the other case when and $\mathbf A$ is of Dini mean oscillation and $\Omega$ is a $C^{2,\alpha}$ domain.
We note that $L^2$ mean oscillation \eqref{13.24g} is used to obtain the estimate \eqref{eq1019sat}, which seems no longer available with $L^1$ mean oscillation \eqref{13.24f}.
We shall derive an estimate which substitutes \eqref{eq1019sat} as follows.
Since $L_0^*$ has constant coefficients and $\partial\Omega$ is of $C^{2,\alpha}$, we have weak type-$(1,1)$ estimate\footnote{Here, we use the assumption that $\Omega$ is a bounded $C^{2,\alpha}$ domain for some $\alpha>0$.} (see e.g., \cite[Lemma~2.4]{DEK18})
\begin{equation}				\label{eq1544w}
\Abs{\set{x \in \Omega: \abs{\tilde w(x)} >t}} \lesssim \frac{1}{t} \int_\Omega \Abs{(\bar A-A) u \chi_{\Omega(x_0,r)}} \le \frac{1}{t} \left(\int_{\Omega(x_0, r)} \abs{\bar A-A}\right) \,\norm{u}_{L^\infty(\Omega(x_0,r))},
\end{equation}
which implies that for any $p \in (0,1)$ we have
\begin{align*}
\int_{\Omega(x_0,r)} \abs{\tilde w}^{p}  &=\int_0^\tau+ \int_\tau^\infty  p t^{p-1} \Abs{\set{x \in \Omega(x_0,r): \abs{\tilde w(x)} >t}} \,dt \\
&\lesssim \abs{\Omega(x_0,r)}  \int_0^\tau  p t^{p-1}\,dt + \abs{\Omega(x_0,r)} \,\omega_A(r) \,\norm{u}_{L^\infty(\Omega(x_0,r))} \int_\tau^\infty  p t^{p-2}\,dt \\
&= \abs{\Omega(x_0,r)} \tau^{p} + \frac{p}{1-p}\, \abs{\Omega(x_0,r)} \,\omega_A(r)\,\norm{u}_{L^\infty(\Omega(x_0,r))} \tau^{p-1}.
\end{align*}
By taking  $\tau= \omega_A(r)\,\norm{u}_{L^\infty(\Omega(x_0,r))}$ in the above, we obtain
\[
\left(\fint_{\Omega(x_0,r)} \abs{\tilde w}^{p} \right)^{1/p} \lesssim  \omega_A(r)\,\norm{u}_{L^\infty(\Omega(x_0,r))},
\]
which substitutes \eqref{eq1019sat}.
For the sake of definiteness, we shall take $p=\frac12$ in the above and get
\begin{equation}					\label{eq1019}
\left(\fint_{\Omega(x_0,r)} \abs{\tilde w}^{\frac12} \right)^{2} \lesssim  \omega_A(r)\,\norm{u}_{L^\infty(\Omega(x_0,r))}.
\end{equation}
Also, instead of \eqref{eq13.46f}, we have the following:
First note that for any $\kappa \in (0,\frac14)$, we always have
\[
\left(\fint_{\Omega(x_0, \kappa r)} \abs{v - v_{x_0, \kappa r}}^{\frac12} \right)^2 \le 2\kappa r \norm{Dv}_{L^\infty(\Omega(x_0, \kappa r))} \le 2\kappa r \norm{Dv}_{L^\infty(\Omega(x_0,r/3))},
\]
for if $B(x_0,\kappa r/2)$ intersects $\partial\Omega$ and we have $v-v_{x_0, \kappa r}=v-0=v-v(\bar x)$ for some $\bar x \in \partial\Omega \cap B(x_0, \kappa r/2)$.

Next, in the case when $r \le 2\dist(x_0, \partial \Omega)$, the interior estimates for equations with constant coefficients yield
\[
\norm{Dv}_{L^\infty(\Omega(x_0,r/3))} \lesssim \frac{1}{r}\left( \fint_{\Omega(x_0, r/2)} \abs{v-c}^{\frac12} \right)^2 \lesssim \frac{1}{r} \left( \fint_{\Omega(x_0, r)} \abs{v-c}^{\frac12} \right)^2,\quad \forall c \in \bR,
\]
for $v-c$ satisfies $L_0^* (v-c)=0$ in $\Omega(x_0, r/2) = B(x_0, r/2)$ and $D(v-c)=Dv$.
In the case when $r > 2\dist(x_0, \partial \Omega)$, by the boundary estimate we have
\[
\norm{Dv}_{L^\infty(\Omega(x_0,r/3))} \lesssim \frac{1}{r} \left( \fint_{\Omega(x_0,r)} \abs{v}^{\frac12} \right)^2.
\]
Combining these together, we conclude that
\begin{equation}			\label{eq15.26m}
\left(\fint_{\Omega(x_0, \kappa r)} \abs{v - v_{x_0, \kappa r}}^{\frac12} \right)^2 \le C_0 \kappa \left( \fint_{\Omega(x_0, r)} \abs{v-c}^{\frac12} \right)^2,
\end{equation}
where $c=0$ when $r > 2\dist(x_0, \partial \Omega)$ and $c\in \bR$ is arbitrary otherwise.

We recall the facts that for all $a, b \ge 0$, we have
\[
(a+b)^{\frac12} \le a^{\frac12} + b^{\frac12}, \quad (a+ b)^{2} \le 2(a^2+b^2),
\]
and
\[
\norm{f+g}_{L^{1/2}} \le 2 \left( \norm{f}_{L^{1/2}} + \norm{g}_{L^{1/2}}\right).
\]

By using the decomposition $u=v+w+\tilde w$, the above facts, and \eqref{eq15.26m}, we obtain
\begin{align*}
\left(\fint_{\Omega(x_0, \kappa r)} \abs{u - v_{x_0, \kappa r}}^{\frac12}\right)^2
& \le2 \left(\fint_{\Omega(x_0, \kappa r)} \abs{v - v_{x_0, \kappa r}}^{\frac12}\right)^2 + 2\left(\fint_{\Omega(x_0, \kappa r)} \abs{w +\tilde w}^{\frac12} \right)^2  \\
& \le2 C_0 \kappa \left(\fint_{\Omega(x_0, r)} \abs{v-c}^{\frac12}\right)^2 +2 \kappa^{-4} \left(\fint_{\Omega(x_0,r)} \abs{w +\tilde w}^{\frac12} \right)^2 \\
& \le  4C_0\kappa \left(\fint_{\Omega(x_0, r)} \abs{u-c}^{\frac12}\right)^2 + (4C_0 \kappa+2\kappa^{-4}) \left(\fint_{\Omega(x_0,r)} \abs{w +\tilde w}^{\frac12} \right)^2\\
& \le 4C_0\kappa \left(\fint_{\Omega(x_0, r)} \abs{u-c}^{\frac12}\right)^2 +(C_0+2\kappa^{-4})\,\omega_A(r)\,\norm{u}_{L^\infty(\Omega(x_0,r))}\\	
&\qquad\qquad+ (C_0+2\kappa^{-4} )\fint_{\Omega(x_0, \kappa r)} \abs{w},
\end{align*}
where we used \eqref{eq1019} and H\"older's inequality in the last step.

Therefore, in place of \eqref{eq0748w}, if we set
\begin{equation*}
\phi(x_0, r):=\left\{
\begin{aligned}
\inf_{c\in \bR}\left(\fint_{\Omega(x_0,r)} \abs{u-c}^{\frac12} \right)^{2}\quad & \text{if } r \le 2\dist(x_0, \partial \Omega)\\
\left(\fint_{\Omega(x_0,r)} \abs{u}^{\frac12} \right)^{2}\quad & \text{if } r >2\dist(x_0, \partial \Omega),\\
\end{aligned}
\right.\end{equation*}
then we still obtain \eqref{eq16.57m} and \eqref{eq0750w}.
Moreover, if we set $q_{x_0,r} \in \bR$ to be a number such that
\[
\phi(x_0,r)=\left(\fint_{\Omega(x_0,r)} \abs{u-q_{x_0,r}}^{\frac12} \right)^{2},
\]
then instead of \eqref{eq0756w}, \eqref{eq1402t}, and \eqref{eq13.13}, we have
\begin{gather*}
\abs{q_{x_0,\kappa^i r} - q_{x_0, r}} \le 4 \sum_{j=0}^i \phi(x_0, \kappa^j r),\\
\lim_{i\to \infty} q_{x_0,\kappa^i r}=u(x_0),\\
\abs{u(x_0)-q_{x_0,r}} \lesssim \phi(x_0, r)+
\norm{u}_{L^\infty(\Omega(x_0,r))} \int_0^r \frac{\omega_{\mathbf A}(t)}t \,dt+1,
\end{gather*}
respectively.
Also, by averaging the inequality
\[
\abs{q_{x_0, r}}^{\frac12} \le \abs{u(x) -q_{x_0, r}}^{\frac12} + \abs{u(x)}^{\frac12}
\]
over $x \in \Omega(x_0, r)$, taking the square, and using H\"older's inequality, we get
\[
\abs{q_{x_0, r}} \le 2 \phi(x_0, r) + 2 \left(\fint_{\Omega(x_0,r)} \abs{u}^{\frac12}\right)^2 \lesssim \fint_{\Omega(x_0,r)} \abs{u}.
\]
By combining these inequalities, we have \eqref{eq14.39t}.
The rest of proof is the same.

\section{Appendix}				
We sketch the proof of the $W^{2,p}$-solvability of \eqref{eq10.36} in a bounded $C^{1,\alpha}$ domain $\Omega\subset \bR^n$ with $\alpha>1-1/\max\{n,p\}$, which enable us to relax the $C^{1,1}$ condition of $\Omega$ in Theorems \ref{thm1} to $C^{1,\alpha}$, where $\alpha>1/2$.
Recall the regularized distance function $\psi$ on $\Omega$ introduced in \cite{Lie85} is such that $\psi(x)$ is comparable to $\text{dist}\,(x,\partial\Omega)$ near the boundary and $\psi\in C^{1,\alpha}(\overline{\Omega})\cap C^\infty(\Omega)$. Without loss of generality, we assume $0\in \partial\Omega$ and the $x_n$-direction is the normal direction at $0$. Taking a small constant $r>0$, we flatten the boundary $\partial\Omega$ near $0$ by making the change of variables
\begin{equation*}
x\in \Omega_r:=\Omega(0,r) \rightarrow y\in \bR^n_{+},\quad y_i(x) = x_i,\,\,i=1,\ldots,n-1,\quad y_n(x)=\psi(x).
\end{equation*}
In the $y$-variables, the equation becomes
\begin{equation*}
 \tilde a_{kl} D_{y_k y_l}u+ \tilde b D_{y_n}  u = f \quad \text{in}\,\,\psi(\Omega_r)\subset\bR^n_{+}
\end{equation*}
with the Dirichlet boundary condition $u=0$ on $\{y_n=0\}$,
where
\[
\tilde a_{kl}=a_{ij}D_{x_i}{y_k} D_{x_j}{y_l},\quad \tilde b=a_{ij}D_{x_i x_j}{\psi}.
\]
Since $\psi\in C^{1,\alpha}(\overline{\Omega})$, $\tilde a_{kl}$ is uniformly continuous in $\psi(\Omega_r)$.
It follows from \cite[Lemma 2.4]{Safonov} that $|b(y)|\le Cy_n^{1-\alpha}$.
In particular, $b\in L^{\max\{n,p\}+\varepsilon}(\psi(\Omega_r))$ for some $\epsilon>0$ provided that $\alpha>1-1/\max\{n,p\}$.
By using the boundary $W^{2,p}$-estimate for elliptic equation,
Gerhardt's inequalities (cf. \cite{G79} or \cite[Lemma 1 (ii)]{K09}), and a standard iteration argument, we conclude that for any $W^{2,p}$-solution $u$,
\[
\|u\|_{W^{2,p}(\Omega_{r/2})}\le C\|f\|_{L^p(\Omega_r)}+C\|u\|_{L^p(\Omega_r)}.
\]
It then follows from a partition of unity argument and the corresponding interior estimate that
\[
\|u\|_{W^{2,p}(\Omega)}\le C\|f\|_{L^p(\Omega)}+C\|u\|_{L^p(\Omega)}.
\]
Now by the proof of \cite[Lemma 9.17]{GT98}, using a compactness argument and the Alexandrov maximum principle, we have
\[
\|u\|_{W^{2,p}(\Omega)}\le C\|f\|_{L^p(\Omega)},
\]
which also gives the uniqueness of solutions.
Finally, the existence of solutions follows from an approximation and bootstrap argument. See the proof of \cite[Theorem 9.15]{GT98}.

\section*{Acknowledgement}
The authors would like to thank Zongyuan Li for pointing out the $W^{2,p}$ solvability in Appendix.
We also thank the referee for careful reading of the manuscript and asking to clarify the dependence of various estimates on $\Omega$.


\end{document}